\newtheorem{Prop}{Proposition}[section]
\newtheorem{Thm}[Prop]{Theorem}
\newtheorem{Rem}[Prop]{Remark}
\theoremstyle{definition}
\newtheorem{Def}[Prop]{Definition}
\newtheorem{Ex}[Prop]{Example}
\title{Nilpotent Lie algebras obtained by quivers and Ricci solitons}
\author{Fumika Mizoguchi \and Hiroshi Tamaru} 
\date{}
\thanks{This work was supported by JST SPRING, Grant Number JPMJSP2139.  The second author was supported by JSPS KAKENHI Grant Number JP22H01124. This work was partly supported by MEXT Promotion of Distinctive Joint Research Center Program JPMXP0723833165.}
\address[F.~Mizoguchi]{Department of Mathematics, Osaka Metropolitan University, Osaka City, Japan 558-8585} 
\email{sw23882u@st.omu.ac.jp}
\address[H.~Tamaru]{Department of Mathematics, Osaka Metropolitan University, Osaka City, Japan 558-8585} 
\email{tamaru@omu.ac.jp}
\DeclareMathOperator{\id}{id}
\DeclareMathOperator{\g}{\mathfrak{g}}
\DeclareMathOperator{\n}{\mathfrak{n}}
\DeclareMathOperator{\Ric}{\mathrm{Ric}}
\DeclareMathOperator{\ric}{\mathrm{ric}}
\DeclareMathOperator{\Der}{\mathrm{Der}}
\DeclareMathOperator{\Path}{\mathrm{Path}}
\begin{document}
	\maketitle
	\begin{abstract}
		Nilpotent Lie groups with left-invariant metrics provide non-trivial examples of Ricci solitons. One typical example is given by the class of two-step nilpotent Lie algebras obtained from simple directed graphs. 
In this paper, however, we focus on the use of quivers to construct nilpotent Lie algebras. 
A quiver is a directed graph that allows loops and multiple arrows between two vertices. 
Utilizing the concept of paths within quivers, we introduce a method for constructing nilpotent Lie algebras from finite quivers without cycles. 
We prove that for all these Lie algebras, the corresponding simply-connected nilpotent Lie groups admit left-invariant Ricci solitons. The method we introduce constructs a broad family of Ricci soliton nilmanifolds with arbitrarily high degrees of nilpotency.

	\end{abstract}
	\section{Introduction}
		Let $(M, g)$ be a Riemannian manifold and $\ric_g$ be the Ricci curvature tensor. The Riemannian metric $g$ is called a {\it Ricci soliton} if there exist $c\in \mathbb{R}$ and a vector field $X$ on $M$ such that
			\[
				\ric_g=c\cdot g+\mathcal{L}_Xg, 
			\]
			where $\mathcal{L}_X$ is the Lie derivative along $X$. In particular, in the case of $\mathcal{L}_Xg=0$, the metric $g$ is said to be {\it Einstein}.  	
		The investigation of Ricci solitons, possibly with some additional structures, is one of central topics in geometry.  Recently, Ricci solitons on homogeneous manifolds have been studied actively, which we mention here (see also \cite{J1}, \cite{J2}, \cite{L2}). Note that the situation of homogeneous Ricci solitons depends on the signature of $c$. In the case of  $c>0$, the study on homogeneous Ricci solitons can be reduced to the study on homogeneous Einstein manifolds. In fact, a homogeneous Ricci soliton with $c>0$ is isometric to the direct product of a flat manifold and a homogeneous Einstein manifold with positive scalar curvature (\cite{PW}). Homogeneous Ricci solitons with $c=0$ are flat (cf.~\cite{A}). Hence, non-trivial homogeneous Ricci solitons satisfy $c<0$. Recently, the longstanding generalized Alekseevskii conjecture has been proved affirmatively by B\"ohm and Lafuente (\cite{BL}), namely, homogeneous Ricci solitons with $c<0$ are isometric to simply-connected solvable Lie groups with left-invariant metrics. A solvable Lie group with a left-invariant metric is called a {\it solvmanifold}, and a nilpotent Lie group with a left-invariant metric is called a {\it nilmanifold}. The famous structural result by Lauret (\cite{L3}) asserts that there is a correspondence between Ricci soliton solvmanifolds and Ricci soliton nilmanifolds. In fact, all Ricci soliton nilmanifolds can be obtained from Ricci soliton solvmanifolds by taking the nilradicals, and  conversely, all Ricci soliton solvmanifolds can be obtained from Ricci soliton nilmanifolds by certain solvable extensions. This concludes that, in the study on homogeneous Ricci solitons, it is particularly important to study nilmanifolds. 
		
		For Ricci soliton nilmanifolds, the two-step case has been studied actively. One of typical examples is the class of two-step nilpotent Lie groups obtained by simple directed graphs by Dani-Mainkar (\cite{DM}), see also Subsection~\ref{graphs}. Note that the underlying Lie algebra is constructed by edges and vertices. For the simply-connected nilpotent Lie groups obtained by this method, the condition whether they admit left-invariant Ricci solitons or not is known by Lauret and Will (\cite{LW}). On the other hand, there are many unexplained problems about higher step nilpotent Lie groups, and not so many examples are known (see \cite{L2} for a list of examples). Hence it is important to construct tractable examples of higher step nilpotent Lie groups, and to study whether they admit left-invariant Ricci solitons or not. 		 
		In this paper, we construct nilpotent Lie algebras from quivers. A quiver is a directed graph where loops and multiple arrows between two vertices are allowed. Recall that, for the two-step nilpotent Lie algebras obtained by simple directed graphs, the edges and vertices are used for the construction. In our construction, we use the notion of paths. A path in a quiver is a sequence of connected arrows. There is a notion of the path algebra of a quiver (see \cite{ASS}, and also Section~\ref{quivers}). The path algebra is an algebra spanned by all paths whose multiplication is defined as follows: for two paths $x$ and $y$, the product $x\cdot y$ is defined by
		\[
			x\cdot y=
			\begin{cases}
				xy & (\text{if }xy \text{ is a path}), \\
				0 & (\text{otherwise}).
			\end{cases}
		\]
		Our Lie algebras constructed by quivers are defined by the commutator products, that is, the Lie brackets are defined by $[x,y]=x\cdot y-y\cdot x$. 
			
		Here we recall some fundamental notions. A path in a quiver is called a {\it cycle} if its source and target coincide. A quiver is said to be {\it finite} if the set of vertices and the set of arrows are both finite. If $Q$ is a finite quiver without cycles, then the Lie algebra obtained by $Q$ is finite-dimensional and nilpotent. We emphasize that, different from the case of directed graphs, from quivers one can construct nilpotent Lie algebras with arbitrarily high nilpotency steps. 
In this paper, we prove the following theorem. 
		\begin{Thm}
			All simply-connected Lie groups obtained by finite quivers without cycles admit left-invariant Ricci solitons. 
		\end{Thm}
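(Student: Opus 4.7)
The plan is to realise $\n_Q$ as an Einstein nilradical, that is, to exhibit an inner product on $\n_Q$ whose Ricci operator has the form $\Ric = c\cdot\id + D$ for some $c\in\mathbb{R}$ and $D\in\Der(\n_Q)$; the desired left-invariant Ricci soliton on the corresponding simply-connected nilpotent Lie group then follows from the standard results of Lauret.

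First I would analyse the natural path basis $\{X_\alpha\}_{\alpha\in\mathcal{P}}$, where $\mathcal{P}$ denotes the set of nontrivial paths in $Q$. If both $\alpha\beta$ and $\beta\alpha$ were paths, then $\alpha\beta$, having equal source and target, would already be a cycle, contradicting the hypothesis on $Q$. Hence $[X_\alpha,X_\beta]\in\{0,\,X_{\alpha\beta},\,-X_{\beta\alpha}\}$, all structure constants lie in $\{0,\pm 1\}$, and $\{X_\alpha\}$ is a \emph{nice basis} in the sense of Lauret--Will and Nikolayevsky.

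Next, for positive weights $t_\alpha > 0$ I would consider the inner product making $\{t_\alpha X_\alpha\}$ orthonormal. Niceness forces the Ricci operator to be diagonal in $\{X_\alpha\}$, with eigenvalues $r_\gamma$ that are explicit quadratic rational expressions in $\{t_\alpha^2\}$, the terms encoding pairs composable into $\gamma$ and pairs that $\gamma$ composes with. Writing $n_a(\gamma)$ for the number of occurrences of an arrow $a$ in a path $\gamma$, the soliton condition $\Ric = c\cdot\id + D$ with $D$ diagonal reduces to finding weights, a constant $c\in\mathbb{R}$, and arrow-scalars $\{d_a\}$ satisfying
\[
r_\gamma \;=\; c + \sum_a n_a(\gamma)\, d_a \qquad \text{for every }\gamma\in\mathcal{P},
\]
since the additivity $n_a(\alpha\beta) = n_a(\alpha) + n_a(\beta)$ guarantees that the right-hand side defines a derivation of $\n_Q$.

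The main obstacle is to establish existence of such weights for an arbitrary finite acyclic quiver. My approach would exploit two structural features of $\n_Q$. First, the path-length $\mathbb{N}$-grading $\n_Q = \bigoplus_{k\geq 1}\n_Q^{(k)}$ supplies the canonical semisimple derivation $D_0(X_\alpha) = \ell(\alpha)\,X_\alpha$ with positive integer eigenvalues, serving as a pre-Einstein derivation. Second, independently rescaling each arrow produces a torus of automorphisms of $\n_Q$ enlarging the space of admissible weights. I would then appeal to Nikolayevsky's positivity criterion for nice bases---which reduces existence to a positive solution of the linear system $YY^\top \xi = \mathbf{1}$ whose rows $e_\alpha + e_\beta - e_{\alpha\beta}$ are indexed by composable pairs of paths---to reformulate the theorem as a combinatorial identity on $Q$. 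Alternatively, Lauret's variational characterisation of nilsolitons as minima of the norm-squared functional on the $\mathrm{GL}(\n_Q)$-orbit of the bracket tensor offers a non-constructive route. Verifying that the path-combinatorics of an acyclic quiver is compatible with the positivity condition is the core technical work I expect.
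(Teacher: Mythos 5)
Your setup is sound: the observation that $\alpha\beta$ and $\beta\alpha$ cannot both be paths in an acyclic quiver, the resulting niceness of the path basis, the diagonality of the Ricci operator for metrics diagonal in that basis, and the reduction of the soliton equation to finding positive weights (equivalently, a positive solution of Nikolayevsky's linear system) all match what the paper establishes in its preliminary sections. But the proposal stops exactly where the proof has to begin. You write that verifying the compatibility of the path combinatorics with the positivity condition is ``the core technical work I expect'' --- that verification \emph{is} the theorem, and neither of your suggested routes (Nikolayevsky's criterion or Lauret's variational characterization) comes with any argument for why an arbitrary finite acyclic quiver passes the test. The criterion is a genuine dichotomy: there exist nilpotent Lie algebras with nice bases, including two-step ones attached to graphs (Lauret--Will), that are \emph{not} Einstein nilradicals, so some quiver-specific input is indispensable and is entirely absent here. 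Likewise, proposing $D_0(X_\alpha)=\ell(\alpha)X_\alpha$ as the pre-Einstein derivation is only a guess with nothing checked, and the final formula $\Ric=-\id+D$ obtained in the paper shows the actual derivation is \emph{not} simply the length grading.

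The paper closes this gap by a fully constructive induction on the nilpotency step. It splits off the starting set $S$ of arrows that can begin a path of maximal length $m$, forms a new quiver $Q'$ with arrow set $(E\setminus S)\cup\{ab\mid a\in S,\ b\in E\}$, so that $\Path(Q)=S\sqcup\Path(Q')$ and $\n_{Q'}$ is $(m-1)$-step, takes the soliton metric on $\n_{Q'}$ given by induction, and extends it by the explicit choice $\langle a,a\rangle=\tfrac{1}{2}(\#P_1^j+\#S_j+1)$ for each $a\in S$ with target $v_j$, where $S_j$ and $P_1^j$ are the arrows of $S$ ending at $v_j$ and the paths of $Q'$ starting at $v_j$. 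Two auxiliary facts make the induction close: an automorphism of $Q'$ carrying $x$ to $ax$ (which forces $|ax|=|x|$ and hence gives the clean value $\langle[\overline a,\overline x],\overline{ax}\rangle=1/|a|$), and an extension lemma for diagonal, $\Aut(Q')$-equivariant derivations of $\n_{Q'}$ to $\n_Q$. If you want to salvage your route, you must actually exhibit a positive solution of the linear system for a general acyclic quiver; the paper's recursively defined weights are, in effect, exactly that solution, and producing them is where all the work lies.
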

		This constructs a large family of examples of Ricci soliton nilmanifolds with arbitrarily high nilpotency steps. Moreover, the nilpotent Lie groups constructed from quivers would be potentially useful for further studies on nilmanifolds. For example, it would be interesting to study whether there exist other distinguished geometric structures on these Lie groups or not. 
		
		The outline of this paper is as follows. In Section~\ref{preliminaries}, we recall some basic facts on left-invariant Riemannian metrics, such as algebraic Ricci solitons.  In order to prove our theorem, it is enough to show that the underlying nilpotent Lie algebras admit inner products which are algebraic Ricci solitons. We also recall some known examples of nilpotent Lie algebras in relation with Ricci solitons. In Section~\ref{quivers}, we define nilpotent Lie algebras $\n_Q$ obtained by quivers $Q$, and describe some examples of them. In Section~\ref{main_theorem}, we prove our main theorem, by producing  inner products  on $\n_Q$ which are algebraic Ricci solitons. The constructed inner products are preserved by the automorphism groups $\mathrm{Aut}(Q)$ of $Q$, and the sets of all paths $\Path(Q)$ in $Q$ are orthogonal bases. Our proof uses an induction with respect to the number of steps involving these properties. We also note that $\Path(Q)$ is a nice basis in the sense of \cite{N}.

	\section{Preliminaries}\label{preliminaries}
		\subsection{Algebraic Ricci solitons}
			In this subsection, we recall some fundamental facts for a simply-connected Lie group with a left-invariant metric. A Lie algebra $\g$ with an inner product $\langle, \rangle$ is called a {\it metric Lie algebra}, and denoted by $(\g, \langle, \rangle)$. Let $(\g, \langle, \rangle)$ be a metric Lie algebra, and $(G, g)$ be the Lie group with left-invariant metric corresponding to $(\g, \langle, \rangle)$. Curvatures of $(G, g)$ can be determined by the corresponding $(\g, \langle, \rangle)$. 
			\begin{Def}
				Let $(\g, \langle, \rangle)$ be a metric Lie algebra and $X$, $Y\in\g$. The {\it Levi-Civita connection} $\nabla:\g\times\g\to\g$ of $(\g, \langle, \rangle)$ is defined by 
				\[
            				2\langle \nabla_X Y,Z\rangle=\langle[X,Y],Z\rangle+\langle[Z, X],Y\rangle+\langle X,[Z,Y]\rangle. 
            			\]
			\end{Def}
			Note that the Levi-Civita connection can be written as
			\[
				\nabla_X Y=\frac{1}{2}[X,Y]+U(X,Y), 
			\]
			where, the symmetric bilinear form $U:\g\times\g\to\g$ is defined by 
			\[
				2\langle U(X,Y),Z\rangle=\langle[Z,X],Y\rangle+\langle X,[Y,Z]\rangle. 
			\]
			\begin{Def}
				Let $(\g, \langle, \rangle)$ be a metric Lie algebra and $X$, $Y$, $Z \in\g$.  
				\begin{enumerate}[(1)]
					\item The {\it Riemannian curvature} $R: \g \times \g \times \g \to \g$ is defined by 
					\[
						R(X, Y)Z\coloneqq \nabla_X \nabla_Y Z-\nabla_Y \nabla_X Z-\nabla_{[X,Y]} Z.
					\]
					\item The {\it Ricci curvature} $\Ric: \g\to\g$ is defined by 
					\[
						\Ric(X)\coloneqq\sum R(X,e_i)e_i, 
					\]
					where $\{e_i\}$ is an orthonormal basis of $\g$. 
				\end{enumerate}
			\end{Def}
			We recall the definitions of some fundamental notations for Lie algebras, such as solvable and nilpotent Lie algebras. 
			\begin{Def}
				Let $D\g=[\g, \g]$, and $D^k\g=D(D^{k-1}\g)$. Then, the Lie algebra $\g$ is said to be {\it solvable} if there exists $r$ such that $D^r\g=0$. 
			\end{Def}
			\begin{Def}
				Let $C^0\g=\g$, and $C^k\g=[C^{k-1}\g, \g]$. Then, the Lie algebra $\g$ is said to be {\it $m$-step nilpotent} if it satisfies $C^{m-1}\g\neq 0$ and $C^m\g=0$. 
			\end{Def}
			\begin{Def}
				Let $\g$ be a Lie algebra. A linear map $D: \g\to\g$ is called a {\it derivation} if for all $a, b\in\g$, 
				\[
					D([a,b])=[D(a), b]+[a, D(b)]. 
				\]
				The set of all derivations is denoted by $\Der(\g)$. 
			\end{Def}
			In order to prove our main theorem, the notation of algebraic Ricci solitons plays an important role. 
			\begin{Def}
				A metric Lie algebra $(\g, \langle, \rangle)$ is said to be {\it algebraic Ricci soliton} if there exist $c \in \mathbb{R}$ and $D \in\Der(\g)$ such that 
				\[
					\Ric=c\cdot\id+D. 
				\]
			\end{Def}
			In the case of simply-connected nilpotent Lie groups, left-invariant Ricci solitons precisely correspond to algebraic Ricci solitons. 
			\begin{Thm}[Lauret (\cite{L1})]
				Let $(G, g)$ be a simply-connected Lie group with a left-invariant metric, and $(\g, \langle, \rangle)$ be the metric Lie algebra corresponding to $(G, g)$. Then, the following properties hold: 
				\begin{itemize}
					\item if $(\g, \langle, \rangle)$ is algebraic Ricci soliton, then $(G, g)$ is Ricci soliton, 
					\item if $G$ is nilpotent and $(G ,g)$ is Ricci soliton, then $(\g, \langle, \rangle)$ is algebraic Ricci soliton. 
				\end{itemize}
			\end{Thm}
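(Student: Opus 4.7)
The plan is to prove the two implications separately, since only the converse uses the nilpotency hypothesis.

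\emph{Algebraic implies geometric.} Assume $\Ric=c\cdot\id+D$ with $D\in\Der(\g)$. Since $\Ric$ and $c\cdot\id$ are self-adjoint with respect to $\langle,\rangle$, so is $D$. Using the simple connectedness of $G$, I would lift the one-parameter subgroup $\exp(tD/2)\subset\Aut(\g)$ to a smooth family $\Phi_t\in\Aut(G)$ and let $X$ be the vector field on $G$ generating this flow. A direct computation at the identity gives
\[
(\mathcal{L}_X g)_e(Y,Z)=\left.\tfrac{d}{dt}\right|_{t=0}\langle e^{tD/2}Y,e^{tD/2}Z\rangle=\tfrac{1}{2}\bigl(\langle DY,Z\rangle+\langle Y,DZ\rangle\bigr)=\langle DY,Z\rangle,
\]
so the Ricci soliton equation $\Ric_g=c\,g+\mathcal{L}_X g$ holds at $e$. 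Both $\Ric_g$ and $g$ are left-invariant; a short check that $\mathcal{L}_X g$ is left-invariant as a $(0,2)$-tensor extends the equation to all of $G$.

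\emph{Geometric implies algebraic (nilpotent case).} Assume $G$ is nilpotent and $\Ric_g=cg+\mathcal{L}_X g$. The main input is Wilson's structural theorem on isometries of simply-connected nilmanifolds: the isometry group of $(G,g)$ splits as $K\ltimes G$, with $K\subset\Aut(G)$ a compact subgroup fixing $e$. The soliton equation says that $g$ is a self-similar solution of the Ricci flow, $g(t)=\sigma(t)\,\varphi_t^*g$, for positive scalars $\sigma(t)$ and diffeomorphisms $\varphi_t$. Since the Ricci flow preserves the class of left-invariant metrics on a nilpotent Lie group, Wilson's theorem allows the factorization $\varphi_t=L_{h(t)}\circ\Phi_t$ with $h(t)\in G$ and $\Phi_t\in\Aut(G)$; because $g$ is left-invariant, $\varphi_t^*g=\Phi_t^*g$. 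Differentiating at $t=0$ extracts a derivation $D_0\in\Der(\g)$, and the soliton equation at $e$ identifies the self-adjoint part of $D_0$ with $\Ric-c\cdot\id$. A final argument produces an honest derivation $D\in\Der(\g)$ with $\Ric=c\cdot\id+D$, using the Jordan decomposition of $D_0$ inside $\Der(\g)$ together with the compactness of $K$.

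The main obstacle is the converse direction. Wilson's theorem is itself nontrivial and genuinely nilpotent in flavor, which is precisely why the statement restricts to nilpotent $G$. A second subtle point is that the derivation extracted from the flow need not be self-adjoint a priori; passing from $D_0$ to its self-adjoint component and verifying that this component still lies in $\Der(\g)$ is the step that genuinely uses nilpotency of $\g$ and the compactness of $K\subset\Aut(G)$.
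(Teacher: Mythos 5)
The paper does not prove this statement: it is quoted as a known theorem of Lauret and cited to \cite{L1}, so there is no in-paper argument to compare yours against. Judged on its own, your first direction is the standard and essentially complete argument: $D=\Ric-c\cdot\id$ is automatically symmetric, $\Der(\g)$ is the Lie algebra of $\Aut(\g)$, simple connectedness lifts $e^{tD/2}$ to $\Phi_t\in\Aut(G)$, and $\Phi_t^*g$ is again left-invariant because automorphisms conjugate left translations to left translations, so the pointwise identity at $e$ propagates. That is exactly how Lauret argues the easy implication.

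The converse is where the content lies, and your outline names the right ingredients (self-similarity of the Ricci flow, preservation of left-invariance, Wilson's description of the isometry group of a simply-connected nilmanifold as $K\ltimes G$ with $K\subset\Aut(G)$) but black-boxes the two steps that actually require work. First, Wilson's theorem gives, for each fixed $t$, \emph{some} factorization $\varphi_t=L_{h(t)}\circ\Phi_t$; it does not give a differentiable choice of $t\mapsto\Phi_t$, and "differentiating at $t=0$" is not justified until one has regularity (Lauret handles this by working with the curve of inner products, equivalently the bracket flow, where the $\Aut(\g)$-orbit structure and a polar-type decomposition give a differentiable representative). Second, and more seriously, the endomorphism $D_0$ you extract need not be symmetric, and the symmetric part of a derivation of a nilpotent Lie algebra is in general \emph{not} a derivation; the sentence invoking "the Jordan decomposition of $D_0$ inside $\Der(\g)$ together with the compactness of $K$" gestures at the fix without supplying it. This is precisely the gap that Jablonski later had to close for general homogeneous Ricci solitons, and in the nilpotent case Lauret's actual proof routes around it via the variational/moment-map characterization of nilsolitons rather than by symmetrizing $D_0$ directly. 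So: direction one is fine; direction two is a correct map of the terrain but not yet a proof.
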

			Recall that it is important to study Ricci soliton solvmanifolds by the generalized Alekseevskii conjecture, proved in \cite{BL}. The famous structural result by Lauret (\cite{L3}) asserts that there is a correspondence between Ricci soliton solvmanifolds and Ricci soliton nilmanifolds.
			\begin{Thm}[Lauret (\cite{L3})]
				The following properties hold: 
				\begin{enumerate}
					\renewcommand{\labelenumi}{(\arabic{enumi})}
					\item Let $(S, g)$ be a Ricci soliton solvmanifold, $(\mathfrak{s}, \langle, \rangle)$ be the corresponding metric Lie algebra, and $\n$ be the nilradical of $\mathfrak{s}$. Then, the simply-connected nilmanifold corresponding to $(\n, \langle, \rangle|_{\n\times\n})$ is Ricci soliton. 
					\item Let $(N, g)$ be a Ricci soliton nilmanifold and $(\n, \langle, \rangle)$ be the corresponding metric Lie algebra. Then there exists a solvable metric Lie algebra $(\mathfrak{s}, \langle, \rangle')$ such that the corresponding simply-connected solvmanifold is Ricci soliton, $\n$ is the nilradical of $\mathfrak{s}$, and $\langle, \rangle'|_{\n\times\n}=\langle, \rangle$. 
				\end{enumerate}
			\end{Thm}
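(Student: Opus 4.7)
By the theorem of Lauret recalled above, since the Lie group in question is nilpotent and simply-connected, proving the existence of a left-invariant Ricci soliton metric reduces to proving that $\n_Q$ admits an inner product making it an algebraic Ricci soliton, i.e.\ satisfying $\Ric = c\cdot\id + D$ for some $c \in \mathbb{R}$ and $D \in \Der(\n_Q)$. Thus the whole argument can be carried out algebraically at the level of $\n_Q$, and that is the reduction I would make first.

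The natural candidate inner product $\langle\,,\,\rangle$ declares the path basis $\Path(Q)$ to be orthonormal. This is essentially the only choice I would consider, since it is $\Aut(Q)$-invariant (forcing $\Ric$ to commute with $\Aut(Q)$), it makes the structure constants of $\n_Q$ equal to $\pm 1$ or $0$ (by the formula $[p,q] = pq - qp$ in the path algebra), and it is compatible with the length grading $\n_Q = \bigoplus_{k\geq 1} V_k$, where $V_k$ is spanned by paths of length $k$. The grading immediately yields a canonical derivation $D_0 \in \Der(\n_Q)$ acting on $V_k$ as $k\cdot\id$, which one expects to be the main ingredient of the soliton derivation $D$.

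The key computation is to evaluate $\Ric$ in the path basis. Using the formulas for $\nabla$, $U$, $R$, and $\Ric$ from the preliminaries, I would express $\langle \Ric(p), p'\rangle$ as a combinatorial sum over triples of paths $(p,q,r)$ that factor as $pq = p'$, $qp = p'$, or $p = qr$. Thanks to the $\pm 1/0$ structure constants, the eigenvalue of $\Ric$ on $p$ depends only on $\Aut(Q)$-invariant combinatorial data of $p$, namely the numbers of left extensions, right extensions, and factorizations; consequently $\Ric$ acts diagonally on $\Aut(Q)$-orbits of paths, which is exactly the symmetry needed for a nilsoliton derivation of the expected diagonal form.

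I would then verify $\Ric = c\cdot\id + D$ by induction on the nilpotency step $m$ of $\n_Q$. The base case $m = 2$ is covered by the Dani--Mainkar / Lauret--Will directed-graph case recalled in the introduction. For the inductive step, $V_m = C^{m-1}\n_Q$ is central, and $\n_Q/V_m$ is the Lie algebra of a quiver of smaller step; the inductive hypothesis gives a derivation witness there, which I would lift to $\n_Q$ and correct by a diagonal operator on $V_m$ to absorb the Ricci contribution of maximal-length paths. The hard part will be this lifting/correction step: showing that the corrected operator really belongs to $\Der(\n_Q)$ rather than being a mere symmetric operator. The combination of $\Aut(Q)$-equivariance of $\Ric$ with the rigidity of path factorizations --- each path has a unique source, target, and sequence of arrows --- is what should force the off-diagonal corrections to align into an honest derivation, completing the induction and hence the proof.
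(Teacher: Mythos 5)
Your proposal does not prove the stated theorem; it proves (or rather sketches) a different one. The statement in question is Lauret's structural correspondence between Ricci soliton solvmanifolds and Ricci soliton nilmanifolds: part (1) asserts that restricting a solvsoliton metric to the nilradical $\n$ of $\mathfrak{s}$ yields a Ricci soliton nilmanifold, and part (2) asserts that every Ricci soliton nilmanifold $(\n,\langle,\rangle)$ arises as the nilradical of some Ricci soliton solvmanifold with a compatible metric. This is a cited result from \cite{L3}, stated for arbitrary solvable and nilpotent Lie groups, with no quivers involved, and the paper itself offers no proof of it. What you have written is instead an outline of the paper's \emph{main} theorem --- the existence of algebraic Ricci solitons on the quiver Lie algebras $\n_Q$ --- which is logically independent of the statement you were asked to prove. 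Nothing in your argument involves nilradicals, solvable extensions, or the passage between $\mathfrak{s}$ and $\n$, so neither direction of the stated correspondence is addressed. A genuine proof would need, for (1), to relate the Ricci operator of $(\mathfrak{s},\langle,\rangle)$ restricted to $\n$ to that of $(\n,\langle,\rangle|_{\n\times\n})$ and show that the solvsoliton equation on $\mathfrak{s}$ forces the nilsoliton equation on $\n$; and for (2), to construct an extension $\mathfrak{s}=\mathbb{R}H\oplus\n$ with $\ad H|_{\n}$ built from the nilsoliton derivation $D$ and to verify that the resulting left-invariant metric is a Ricci soliton. None of these ingredients appear in your text.

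Even if one reads your sketch as an attempt at the paper's main theorem, it has concrete gaps. The orthonormal path basis is not the right inner product: the paper must rescale the arrows in the starting set $S$, setting $\langle a,a\rangle=\frac{1}{2}(\#P_1^j+\#S_j+1)$, and it is exactly this choice that makes the non-derivation part of $\Ric$ equal to $-\id$ plus an honest derivation; with all paths of unit norm the verification $A[a,z_1]=[Aa,z_1]+[a,Az_1]$ at the end of the paper's proof breaks down in general. Your proposed base case is also wrong: the Lauret--Will theorem characterizes which graph Lie algebras admit nilsolitons (only the ``positive'' graphs), so it does not hand you the two-step case; the paper instead starts the induction at the abelian case and peels off the starting set $S$ (passing from $Q$ to $Q'$) rather than quotienting by the center $V_m$. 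Finally, the ``lifting/correction'' step that you defer is precisely where all the work lies --- it occupies Propositions \ref{Aut} and \ref{Der} and the entire final computation --- and asserting that $\Aut(Q)$-equivariance ``should force'' the corrected operator to be a derivation is not a proof.
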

			Finally in this subsection, we recall a formula of the Ricci curvature for the nilpotent case. 
			Let $(\n,\langle,\rangle)$ be a metric nilpotent Lie algebras, and $\{X_1,\ldots,X_n\}$ be an orthonormal basis of $(\n,\langle,\rangle)$. Then the following holds (cf. \cite{L2}, \cite{N}): 
			\begin{equation}\label{Ric_nil}
            			\langle\Ric(X),Y\rangle=-\frac{1}{2}\sum_{i,j}\langle[X,X_i],X_j\rangle\langle[Y,X_i],X_j\rangle+\frac{1}{4}\sum_{i,j}\langle[X_i,X_j],X\rangle\langle[X_i,X_j],Y\rangle. 
            		\end{equation}
		\subsection{Nilpotent Lie algebras obtained by graphs}\label{graphs}
		In this subsection, we recall nilpotent Lie algebras obtained by graphs. A directed simple graph is an ordered pair $(V, E)$ consisting of a set of vertices $V$, and a set of edges $E\subset\{(x, y)\in V\times V\mid x\neq y\}$. 
			For $e=(x, y)\in E$, the vertex $x$ is called the {\it source} of $e$ and $y$ is called the {\it target} of $e$. 
			\begin{Def}[Dani-Mainkar (\cite{DM})]
				Let $(V, E)$ be a directed simple graph. The space $\n$ is defined as an $\mathbb{R}$-vector space having the set $V \cup E$ as basis. The space $\n$ becomes a 2-step nilpotent Lie algebra  with Lie bracket given by 
				\[
                        			[x,y]=
                        			\begin{cases}
                        				e & (x,y\in V,e=(x,y)), \\
                        				-e & (x,y\in V,e=(y,x)), \\
                        				0 & (\textrm{otherwise}).
                        			\end{cases}
            			\]	
			\end{Def}
			\begin{Thm}[Lauret-Will (\cite{LW})]
				The nilpotent Lie algebra obtained by a graph admits algebraic Ricci soliton if and only if the graph is ``positive''. 
			\end{Thm}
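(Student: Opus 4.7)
The plan is to translate the algebraic Ricci soliton condition into a positivity condition on a linear system associated with the graph, exploiting the canonical two-step grading of $\n=\mathrm{span}(V)\oplus\mathrm{span}(E)$. First I would fix the graph $(V,E)$ and consider the family of inner products that make the basis $V\cup E$ orthogonal, with independent positive weights $\alpha_v$ on vertices and $\beta_e$ on edges. Because $[\n,\n]\subseteq\mathrm{span}(E)$ and $\mathrm{span}(E)$ is central, the Ricci formula~(\ref{Ric_nil}) simplifies considerably: the mixed matrix entries of $\Ric$ between vertex and edge basis vectors vanish, so $\Ric$ is diagonal in this basis. A direct computation then expresses each vertex eigenvalue of $\Ric$ as $-\tfrac12\sum_{e\ni v}(\text{ratio of the }\alpha,\beta\text{'s})$ and each edge eigenvalue as $+\tfrac14(\text{ratio involving the two endpoints of }e)$.

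Next I would use the grading to describe derivations: the grading derivation $D_{0}$ acting as $1$ on $\mathrm{span}(V)$ and $2$ on $\mathrm{span}(E)$ always lies in $\Der(\n)$, and more generally every derivation preserving our orthogonal splitting and the basis is determined by scalars on orbits of $\Aut(Q)$. The condition $\Ric=c\cdot\id+D$ with $D\in\Der(\n)$ therefore reduces, after absorbing an appropriate multiple of $D_{0}$ into the $c\cdot\id$ term, to the requirement that the diagonal Ricci eigenvalues fit the pattern of some admissible diagonal derivation. Writing this out gives a linear system $A\mathbf{x}=\mathbf{b}$ in the variables $\log\alpha_v$, $\log\beta_e$, whose coefficient matrix $A$ is built from the incidence data of the graph. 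One then \emph{defines} the graph to be ``positive'' to mean precisely that this system admits a solution with all coordinates positive; for the forward direction, such a solution produces weights $\alpha_v,\beta_e$ that make the resulting inner product an algebraic Ricci soliton.

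For the converse, I would invoke Lauret's uniqueness of nilsoliton inner products up to automorphism and scaling: if $\n$ admits \emph{any} algebraic Ricci soliton inner product, then by averaging over a maximal compact torus in $\Aut(\n)$ that acts diagonally on the basis $V\cup E$, one may assume the soliton metric has our diagonal form, which then extracts the required positive solution of the linear system. The main obstacle I expect is exactly this reverse direction, since it requires both the uniqueness theorem for nilsolitons and an explicit enough description of $\Aut(\n)$ to guarantee that the averaging does not destroy the soliton property; this is where the combinatorics of the graph and the moment-map/GIT machinery behind Lauret's theory have to be meshed carefully.
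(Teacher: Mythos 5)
This statement is quoted from Lauret--Will \cite{LW}; the paper gives no proof and even defers the definition of ``positive'' to that reference, so there is no in-paper argument to compare yours against. Judged on its own terms, your forward direction is the standard nice-basis computation and is essentially right in outline, with one technical correction: the soliton equations are not linear in $\log\alpha_v$, $\log\beta_e$. The right variables are the squared structure constants $c_e^2=\beta_e/(\alpha_{s(e)}\alpha_{t(e)})$; the derivation condition $d_e=d_{s(e)}+d_{t(e)}$ combined with the diagonal Ricci eigenvalues turns $\Ric=c\cdot\id+D$ into the system $U\mathbf{x}=\lambda\mathbf{1}$ where $U_{ee}=3$, $U_{ef}=1$ if $e\neq f$ share a vertex and $0$ otherwise, and $\mathbf{x}=(c_e^2)$. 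The logarithms enter only to verify that every positive vector $\mathbf{x}$ is realized by some choice of weights. ``Positive'' in \cite{LW} means precisely that this system has a solution with all coordinates positive, so you should verify your definition coincides with theirs rather than stipulate it; otherwise the theorem is vacuous.

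The genuine gap is in your converse. Averaging a left-invariant metric over a maximal compact torus of $\Aut(\n)$ does not preserve the algebraic Ricci soliton condition: $\Ric$ is not affine in the metric, and the average of soliton metrics need not be a soliton. What actually closes this direction is Lauret's uniqueness of the nilsoliton metric up to scaling and automorphism together with Nikolayevsky's theorem (\cite{N}) that a nilpotent Lie algebra admitting a nice basis admits a nilsoliton if and only if it admits one for which that nice basis is orthogonal; the proof of the latter goes through the variational/moment-map characterization of nilsolitons and the convexity of the moment map image restricted to the torus orbit, not through averaging. Since $V\cup E$ is a nice basis for the Dani--Mainkar algebra, this reduces existence of any nilsoliton to existence of a diagonal one, and hence to positive solvability of $U\mathbf{x}=\lambda\mathbf{1}$. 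Without replacing the averaging step by this argument, your reverse implication does not go through.
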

			We refer to \cite{LW} the definition of the positivity of a graph. 
		\subsection{Nilpotent parts of parabolic subalgebras}
			In this subsection, we recall the solvable parts and the nilpotent parts of the Langlands decompositions of parabolic subalgebras. For details, we refer to \cite{T} and references therein. 
			
			Let $\g$ be a semi-simple Lie algebra, $\g=\mathfrak{k}\oplus\mathfrak{p}$ be the Cartan decomposition, and $\mathfrak{a}$ be a maximal abelian subspace of $\mathfrak{p}$. In the usual way, $\mathfrak{a}$ defines the restricted root system $\Delta$ of $\g$ with respect to $\mathfrak{a}$. Let $\Lambda$ be a simple root system of $\Delta$. For each proper subset $\Phi\subsetneq\Lambda$, a parabolic subalgebra $\mathfrak{q}_{\Phi}$ of $\g$ is defined. Moreover, $\mathfrak{q}_{\Phi}$ has the Langlands decomposition $\mathfrak{q}_{\Phi}=\mathfrak{m}_{\Phi}\oplus\mathfrak{a}_{\Phi}\oplus\n_{\Phi}$, which is a decomposition into a reductive Lie algebra $\mathfrak{m}_{\Phi}$, an abelian Lie algebra $\mathfrak{a}_{\Phi}$ and a nilpotent Lie algebra $\n_{\Phi}$. The solvable subalgebra $\mathfrak{a}_{\Phi}\oplus\n_{\Phi}$ is called the {\it solvable part}, and the nilpotent subalgebra $\n_{\Phi}$ is called the {\it nilpotent part}. 
			
			Here we illustrate an example. In the case of $\g=\mathfrak{sl}(n, \mathbb{R})$, each parabolic Lie algebra $\mathfrak{q}_{\Phi}$ is given by block upper-triangular matrices up to congruence, and its nilpotent part consists of block strictly upper-triangular matrices.  The following example is the nilpotent part of the parabolic subalgebra given by the $(1, 2, 1, 1)$-decomposition. 
			\[
                        		\left\{\left(
                        		\begin{array}{c|cc|c|c}
                        			0 & x_1 &x_2 & x_3 & x_4 \\ \hline
                        			0 & 0 & 0 & x_5 & x_6 \\
                        			0 & 0 & 0 & x_7 & x_8 \\ \hline
                        			0 & 0 & 0 & 0 & x_9 \\ \hline 
                        			0 & 0 & 0 & 0 & 0 
                        		\end{array}
                        		\right)\mid x_i\in \mathbb{R}\right\}. 
            		\]
			\begin{Thm}[Tamaru (\cite{T})]
				Let $\g$ be a semi-simple Lie algebra, $\mathfrak{q}_{\Phi}$ be a parabolic subalgebra of $\g$, and $\mathfrak{q}_{\Phi}=\mathfrak{m}_{\Phi}\oplus\mathfrak{a}_{\Phi}\oplus\mathfrak{n}_{\Phi}$ be the Langlands decomposition of $\mathfrak{q}_{\Phi}$.  Then, the simply-connected Lie group corresponding to the solvable part $\mathfrak{a}_{\Phi}\oplus\mathfrak{n}_{\Phi}$ admits a left-invariant Einstein metric. In particular, the nilpotent part $\n_{\Phi}$ admits an algebraic Ricci soliton. 
			\end{Thm}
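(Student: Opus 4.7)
The plan is to construct an explicit left-invariant Einstein metric on the simply-connected Lie group corresponding to $\mathfrak{s}_{\Phi}:=\mathfrak{a}_{\Phi}\oplus\n_{\Phi}$ and then read off the algebraic Ricci soliton property on $\n_{\Phi}$ by restriction. The candidate metric is the restriction to $\mathfrak{s}_{\Phi}$ of the standard $\theta$-invariant inner product $\langle X,Y\rangle:=-B(X,\theta Y)$ on $\g$, where $B$ is the Killing form and $\theta$ is the Cartan involution associated with $\g=\mathfrak{k}\oplus\mathfrak{p}$. Since $\mathfrak{a}_{\Phi}\subset\mathfrak{a}$ and $\n_{\Phi}\subset\n$ both sit inside the Iwasawa solvable part $\mathfrak{a}\oplus\n$, this restriction is well-defined and the restricted root space decomposition supplies a preferred orthogonal basis throughout.

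First I would unpack the structure of $\n_{\Phi}$ via the decomposition $\n_{\Phi}=\bigoplus_{\lambda\in\Delta_{\Phi}^{+}}\g_{\lambda}$, where $\Delta_{\Phi}^{+}$ consists of positive roots with nontrivial coefficient on some simple root in $\Lambda\setminus\Phi$. Three structural facts are crucial: the root spaces are mutually $\langle,\rangle$-orthogonal; each $H\in\mathfrak{a}_{\Phi}$ acts on $\g_{\lambda}$ as the scalar $\lambda(H)$, making $\ad_{H}$ symmetric with respect to $\langle,\rangle$; and $\n_{\Phi}$ carries a grading by $(\Lambda\setminus\Phi)$-height that is realized as $\ad_{H_{\Phi}}$ for a distinguished element $H_{\Phi}\in\mathfrak{a}_{\Phi}$.

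Next I would compute the Ricci operator of $(\n_{\Phi},\langle,\rangle|_{\n_{\Phi}})$ using formula~\eqref{Ric_nil}. Since the root space decomposition is orthogonal and $[\g_{\lambda},\g_{\mu}]\subset\g_{\lambda+\mu}$, both sums in \eqref{Ric_nil} vanish unless $X$ and $Y$ lie in the same $\g_{\lambda}$, so $\Ric_{\n_{\Phi}}$ acts as a scalar on each root space, with eigenvalue depending only on $\lambda$. The central algebraic claim is that this eigenvalue takes the form $c+\lambda(H_{0})$ for a common $c\in\mathbb{R}$ and a fixed $H_{0}\in\mathfrak{a}_{\Phi}$. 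This yields $\Ric_{\n_{\Phi}}=c\cdot\id+\ad_{H_{0}}|_{\n_{\Phi}}$, and since $\ad_{H_{0}}|_{\n_{\Phi}}\in\Der(\n_{\Phi})$, it supplies the algebraic Ricci soliton structure on the nilpotent part.

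Finally, to promote this to an Einstein metric on $\mathfrak{s}_{\Phi}$, I would apply Heber's standard-extension principle: with $\mathfrak{a}_{\Phi}$ acting on $\n_{\Phi}$ by symmetric operators containing $\ad_{H_{0}}$, an appropriate rescaling of $\langle,\rangle|_{\mathfrak{a}_{\Phi}}$ makes the $\mathfrak{a}_{\Phi}$-contribution to $\Ric_{\mathfrak{s}_{\Phi}}$ cancel the $\ad_{H_{0}}$ piece exactly, leaving $\Ric_{\mathfrak{s}_{\Phi}}=c\cdot\id$. The hard part of the whole argument is the explicit identification of $H_{0}$ and the verification that the eigenvalue of $\Ric_{\n_{\Phi}}$ on $\g_{\lambda}$ really is linear in $\lambda$; this reduces to a linear identity among root heights, multiplicities $\dim\g_{\lambda}$, and the structure constants $N_{\lambda,\mu}$ appearing in \eqref{Ric_nil}, and ultimately rests on the rigidity of root strings in a semi-simple Lie algebra, which forces the two sums in \eqref{Ric_nil} to assemble into an expression of the form $c+\lambda(H_{0})$.
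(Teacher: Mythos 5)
This statement is quoted background: the paper does not prove it, but cites Tamaru~\cite{T} (``For details, we refer to \cite{T} and references therein''), and the ``in particular'' clause is obtained from the Einstein property of the solvable part via Lauret's structure theorem~\cite{L3}. So there is no in-paper proof to compare against; your proposal has to stand on its own, and as written it does not.

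The genuine gap is your ``central algebraic claim'' that the eigenvalue of $\Ric_{\n_{\Phi}}$ on $\g_{\lambda}$ has the form $c+\lambda(H_{0})$ with $c$ and $H_{0}$ independent of $\lambda$. That claim \emph{is} the nilsoliton condition, i.e.\ essentially the theorem itself; deferring it to ``a linear identity among root heights, multiplicities and structure constants'' that ``ultimately rests on the rigidity of root strings'' is not an argument, and it is far from clear that root-string rigidity alone assembles the two sums of~(\ref{Ric_nil}) into an affine function of $\lambda$ --- the required identities involve the multiplicities $\dim\g_{\lambda}$ and the constants $N_{\lambda,\mu}$ in an essential way, and this is exactly the hard content that \cite{T} handles by a different (geometric) route rather than by brute-force root combinatorics. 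A second, smaller gap: block-diagonality of $\Ric_{\n_{\Phi}}$ with respect to the root-space decomposition only shows that $\Ric_{\n_{\Phi}}$ \emph{preserves} each $\g_{\lambda}$; when $\dim\g_{\lambda}>1$ you still owe an argument that the restriction is a scalar operator (this is again a consequence of the soliton identity, not an input to it). The surrounding scaffolding is fine --- orthogonality of root spaces for $-B(\cdot,\theta\cdot)$, symmetry of $\ad_{H}$ for $H\in\mathfrak{a}_{\Phi}$, and the Heber--Lauret passage from a nilsoliton with a compatible abelian algebra of symmetric derivations to an Einstein solvable extension are all standard --- but the core computation is asserted rather than proved, so the proposal does not yet constitute a proof.
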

			
			Note that there are some intersections between the nilpotent parts $\n_{\Phi}$ described above, and the nilpotent Lie algebras obtained by quivers. However each one is not a subclass of the other. This will be mentioned in the latter sections. 
	\section{Lie algebras obtained by quivers}\label{quivers}
		In this section, we construct Lie algebras from quivers. In particular, from finite quivers without cycles, one obtains finite-dimensional nilpotent Lie algebras. We refer to \cite{ASS} for quivers and path algebras. 
		\subsection{Preliminaries on quivers}
			In this subsection, we recall some basic notions on quivers. A quiver is a directed graph where loops and multiple arrows between two vertices are allowed. The precise definition is given as follows. 
			\begin{Def}
				A {\it quiver} $Q=(V, E, s, t)$ is a quadruple consisting of two sets $V$, $E$, and two maps $s, t: E\to V$. Elements of $V$ and $E$ are called {\it vertices} and {\it arrows}, respectively. For an arrow $\alpha\in E$, the vertices $s(\alpha)$ and $t(\alpha)$ are called the {\it source} and the {\it target} of $\alpha$, respectively.
			\end{Def}
			Quivers consist of vertices and arrows. Note that the notion of paths is also important, which is defined as follows. 
			\begin{Def}
				Let $Q=(V, E, s, t)$ be a quiver. A {\it path of length $m$} is a sequence $\alpha_1\alpha_2\cdots \alpha_m$, where $\alpha_1, \ldots, \alpha_m\in E$ and they satisfy $t(\alpha_i)=s(\alpha_{i+1})$ for $i=1,\ldots, m-1$.
			\end{Def}
			\begin{Ex}\label{Ex_quiver}
				The following gives a quiver: $V=\{v_1,v_2,v_3,v_4,v_5\},E=\{a,b,c,d,e\}$, $s(a)=v_1$, $s(b)=v_2$, $s(c)=v_3$, $s(d)=v_3$, $s(e)=v_4$, $t(a)=v_2$, $t(b)=v_4$, $t(c)=v_4$, $t(d)=v_4$, $t(e)=v_5$. It is illustrated as in Figure \ref{quiver}. Paths of this quiver are $a$, $b$, $c$, $d$, $e$, $ab$, $be$, $ce$, $de$, and $abe$. 
				For a path $x=\alpha_1\alpha_2\cdots \alpha_m$ in a quiver $Q$, the vertices $s(\alpha_1)$ and $t(\alpha_m)$ are also called the {\it source} and the {\it target} of $x$, and denoted by $s(x)$ and $t(x)$ respectively.  In this way, the maps $s, t: E\to V$ can be extended to $s, t: \Path(Q)\to V$, where $\Path(Q)$ is the set of all paths in $Q$.   
			\end{Ex}
			\begin{figure}[H]
				\begin{center}
					\includegraphics[scale=0.3]{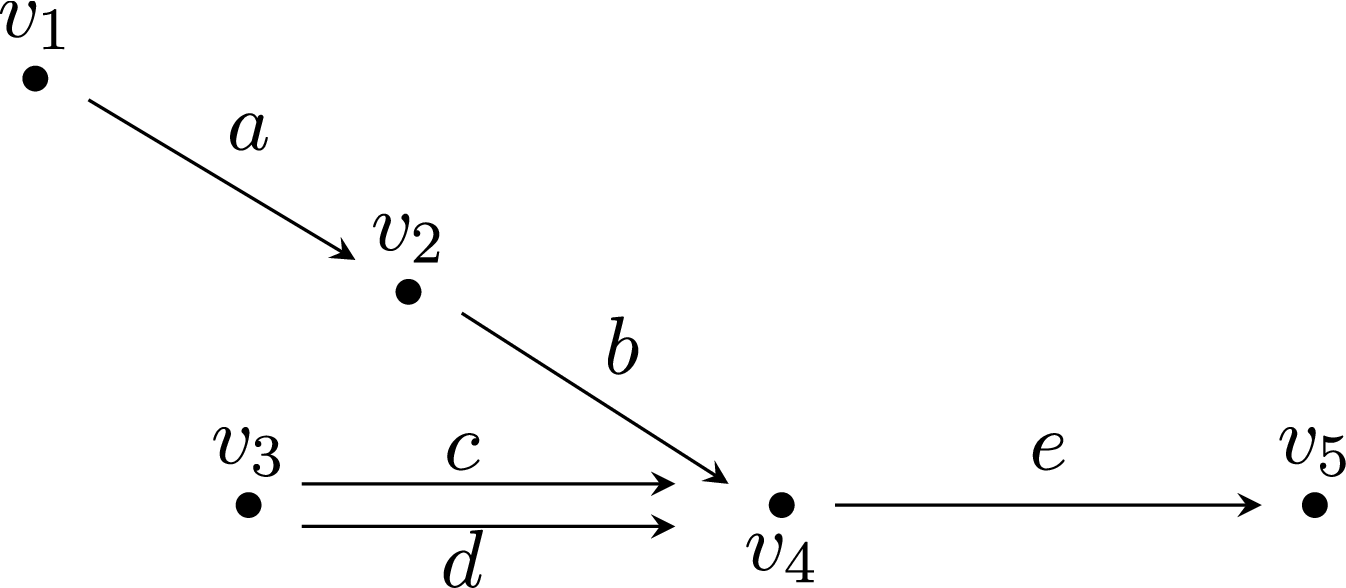}
					\caption{The quiver in Example \ref{Ex_quiver}}
					\label{quiver}
				\end{center}
			\end{figure}
			A path whose source and target coincide is called a {\it cycle}. A quiver is said to be {\it finite} if the set of vertices and the set of arrows are both finite. 
			\begin{Def}
				Let $Q=(V, E, s, t)$ be a quiver. A map $f: E\to E$ is called an {\it automorphism} if the following conditions hold:
				\begin{itemize}
					\item $f$ is bijective,
					\item $t(x)=s(y)$ is equivalent to $t(f(x))=s(f(y))$, for any $x$, $y$ $\in E$. 
				\end{itemize}
			\end{Def} 
			The automorphism group of a quiver $Q$ is defined by 
			\[
				\mathrm{Aut}(Q)\coloneqq\{f: E\to E\mid f:\text{automorphism}\}. 
			\]
			Note that $\mathrm{Aut}(Q)$ acts naturally on $\Path(Q)$. In fact, for a path $x=\alpha_1\alpha_2\cdots \alpha_m$ in a quiver $Q$, the action of $f$ $\in\mathrm{Aut}(Q)$ is given by 
			\[
				f(x)\coloneqq f(\alpha_1)f(\alpha_2)\cdots f(\alpha_m).
			\]
		\subsection{Lie algebras obtained by quivers}
			In this subsection, we define Lie algebras obtained by quivers. Also we introduce some properties of these Lie algebras. Recall that $\Path(Q)$ is the set of all paths in $Q$. 
			\begin{Def}
				Let $Q=(V, E, s, t)$ be a quiver. The {\it path algebra} $\n_Q$ is defined as an $\mathbb{R}$-vector space having $\Path(Q)$ as basis with the following product: it is bilinear, and the product between two paths $x$ and $y$ is defined by the concatenation, that is 
				\[
            				x\cdot y=
            				\begin{cases}
            					xy & (\text{if }xy \text{ is a path}), \\
            					0 & (\text{otherwise}).
            				\end{cases}
            			\]
				The space $\n_Q$ equipped with the bracket product $[x, y]\coloneqq x\cdot y-y\cdot x$ is called the {\it Lie algebra obtained by a quiver}. 
			\end{Def}
			For the path algebras $(\n_Q, \cdot)$, we refer to \cite{ASS}. Note that path algebras usually contain vertices, as paths of length 0. However, we have to note that the Lie algebras $(\n_Q, [,])$ do not contain vertices, that is, we only consider paths of length $\geq1$. 

				
			\begin{Prop}\label{length}
				Let $Q=(V, E, s, t)$ be a finite quiver without cycles. Then, the following properties hold:  
				\begin{enumerate}[(1)]
					\item for a path $\alpha_1\alpha_2\cdots\alpha_r\in\Path(Q)$ with $\alpha_1, \alpha_2, \ldots, \alpha_r\in E$, each edge can appear at  most once. That is, it satisfies $\alpha_i\neq\alpha_j$ if $i\neq j$.  
					\item there exists the maximum of the lengths of paths, which is called the {\it length of a quiver}.
				\end{enumerate} 
			\end{Prop}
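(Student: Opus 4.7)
The plan is to prove (1) by contradiction, extract a cycle from any repeated edge, and then derive (2) as an immediate consequence together with the finiteness hypothesis.

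For part (1), suppose toward a contradiction that the path $\alpha_1\alpha_2\cdots\alpha_r$ contains a repeated edge, and pick the smallest pair of indices $i<j$ with $\alpha_i=\alpha_j$. Since $\alpha_i=\alpha_j$, we have in particular $s(\alpha_i)=s(\alpha_j)$. Consider the subsequence $\alpha_i\alpha_{i+1}\cdots\alpha_{j-1}$. Because $\alpha_1\cdots\alpha_r$ is a path, the concatenation condition $t(\alpha_k)=s(\alpha_{k+1})$ holds for each $k\in\{i,\ldots,j-2\}$, so $\alpha_i\cdots\alpha_{j-1}$ is itself a path in $Q$. Its source is $s(\alpha_i)$ and its target is $t(\alpha_{j-1})=s(\alpha_j)=s(\alpha_i)$. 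Since $i<j$, this subpath has positive length, hence is a cycle in $Q$, contradicting the hypothesis that $Q$ has no cycles.

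For part (2), put $n\coloneqq|E|$, which is finite by assumption. Any path $\alpha_1\alpha_2\cdots\alpha_r$ can, by (1), be identified with an $r$-tuple of pairwise distinct elements of $E$, so $r\le n$. Therefore the set of lengths of paths in $Q$ is a non-empty (assuming $E\neq\emptyset$; otherwise the statement is vacuous and $\n_Q=0$) subset of $\{1,2,\ldots,n\}$, which has a maximum. This maximum is the length of the quiver.

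The only step that requires any care is verifying that the truncated sequence $\alpha_i\alpha_{i+1}\cdots\alpha_{j-1}$ is genuinely a path (and in particular has length $\ge 1$), but this is immediate from the definition of a path together with $i<j$. Thus I do not expect any serious obstacle; the whole argument is a direct unwinding of the definitions of ``path'' and ``cycle'' combined with the pigeonhole principle applied to the finite edge set $E$.
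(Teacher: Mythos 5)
Your proof is correct and follows essentially the same route as the paper: extract the cycle $\alpha_i\cdots\alpha_{j-1}$ from a repeated edge to contradict the no-cycle hypothesis, then bound path lengths by $\#E$ to get the maximum. The extra care you take (minimality of $i<j$, checking the concatenation condition, the vacuous case $E=\emptyset$) is fine but not needed beyond what the paper records.
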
 
			\begin{proof}
				First of all, we prove (1). Suppose that there exist $i<j$ such that $\alpha_i=\alpha_j$.  Then a path $\alpha_i\cdots\alpha_{j-1}$ is a cycle, since it satisfies $t(\alpha_i\cdots\alpha_{j-1})=t(\alpha_{j-1})=s(\alpha_j)=s(\alpha_i)=s(\alpha_i\cdots\alpha_{j-1})$. Since $Q$ has no cycles by assumption, this is a contradiction, which proves (1). The assertion (2) easily follows from (1), since $\# E<\infty$. 
			\end{proof}
			We now show that the Lie algebras obtained by finite quivers without cycles are nilpotent. Also, the number of steps of $\n_Q$ coincides with  the length of $Q$. 
			\begin{Prop}\label{step}
				Let $Q$ be a finite quiver without cycles. Then the following properties hold:
				\begin{enumerate}[(1)]
					\item for all $x$, $y\in\Path(Q)$, it satisfies $[x, y] \in \{0\}\cup (\pm\Path(Q))$, 
					\item the subspaces $\n_Q^i$ of $\n_Q$ spanned by paths of length $i$ satisfy $[\n_Q^i, \n_Q^j]=\n_Q^{i+j}$, 
					\item if $Q$ is a quiver of length $m$, then the Lie algebra $\n_Q$ is an $m$-step nilpotent Lie algebra. 
				\end{enumerate}
			\end{Prop}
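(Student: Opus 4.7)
The plan is to exploit the length grading $\n_Q = \n_Q^1 \oplus \cdots \oplus \n_Q^m$ provided by Proposition~\ref{length}(2), and to deduce everything from a single observation about acyclic quivers.

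For (1), the key remark is that if $x, y \in \Path(Q)$, then at most one of the concatenations $xy$ and $yx$ can again be a path. Indeed, if both were paths, the equalities $t(x)=s(y)$ and $t(y)=s(x)$ would force $s(xy)=s(x)=t(y)=t(xy)$, making $xy$ a cycle and contradicting the hypothesis on $Q$. Since $[x,y]=x\cdot y-y\cdot x$ and each summand is either a path or zero, this yields $[x,y] \in \{0\}\cup(\pm\Path(Q))$, which is precisely (1).

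For (2), the inclusion $[\n_Q^i,\n_Q^j]\subset\n_Q^{i+j}$ is immediate from (1) and bilinearity, because nonzero concatenations of paths of lengths $i$ and $j$ have length $i+j$. For the reverse inclusion, I would take an arbitrary path $z=\alpha_1\cdots\alpha_{i+j}$ of length $i+j$ and split it as $z=xy$ with $x=\alpha_1\cdots\alpha_i\in\Path(Q)$ and $y=\alpha_{i+1}\cdots\alpha_{i+j}\in\Path(Q)$; the cycle argument above forces $yx$ not to be a path, so $y\cdot x=0$ and $[x,y]=xy=z$, which shows $z\in[\n_Q^i,\n_Q^j]$. Since the paths of length $i+j$ span $\n_Q^{i+j}$, this gives the reverse inclusion.

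Finally, (3) follows by a short induction using (2): one checks that $C^{k-1}\n_Q=\bigoplus_{i\geq k}\n_Q^i$. The base case $k=1$ is the decomposition itself, and the inductive step is $C^k\n_Q=[C^{k-1}\n_Q,\n_Q]=\sum_{i\geq k,\ j\geq 1}[\n_Q^i,\n_Q^j]=\sum_{i\geq k,\ j\geq 1}\n_Q^{i+j}=\bigoplus_{\ell\geq k+1}\n_Q^\ell$. Taking $k=m$ and $k=m+1$, and using that the length of $Q$ equals $m$, one obtains $C^{m-1}\n_Q=\n_Q^m\neq 0$ and $C^m\n_Q=0$, so $\n_Q$ is $m$-step nilpotent. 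There is no genuine obstacle here; the only real insight is the acyclicity trick of (1), which both kills the ``competing'' term in each bracket and lets every long path be realized as a bracket.
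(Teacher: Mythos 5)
Your proof is correct and follows essentially the same route as the paper's: show that at most one of the concatenations $xy$, $yx$ can be a path, deduce both inclusions in (2) by splitting a path of length $i+j$, and compute the lower central series from the grading for (3). The only (harmless) difference is in (1): the paper derives the contradiction by observing that $xyx$ would be a path with a repeated arrow, contradicting Proposition~\ref{length}(1), whereas you note directly that $xy$ would be a cycle --- both arguments reduce to the acyclicity of $Q$.
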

			\begin{proof}
				First of all, we prove (1). Take any $x, y\in\Path(Q)$. By definition, it satisfies $[x, y]=x\cdot y-y\cdot x$. It is enough to show that $x\cdot y=0$ or $y\cdot x=0$. Assume that both are nonzero. Since $xy$ and $yx$ are paths, $xyx$ is a path. This contradicts to Proposition \ref{length} (1), which proves (1). 
				
				We next prove (2). Let $x$ and $y$ be paths of length $i$ and $j$, respectively. It follows from (1) that the bracket $[x, y]$ is $0$, $xy$ or $-yx$. Therefore, it satisfies $[x, y]\in\n_Q^{i+j}$. This proves $(\subset)$. In order to show the converse inclusion, let $z=\alpha_1\alpha_2\cdots\alpha_i\beta_1\beta_2\cdots\beta_j$ be a path of length $i+j$, where $\alpha_1, \ldots, \alpha_i, \beta_1, \ldots, \beta_j\in E$. Set $x=\alpha_1\alpha_2\cdots\alpha_i$ and $y=\beta_1\beta_2\cdots\beta_j$. Then $yx$ is not a path by (1), and hence one has 
				\[
					z=xy=[x,y]\in[\n_Q^i, \n_Q^j]. 
				\]
				This completes the proof of the assertion (2).  
				
				Finally, the assertion (3) follows from (2).  Recall that the descending central series is defined by $C^k\n_Q=[C^{k-1}\n_Q, \n_Q]$ and $C^0\n_Q=\n_Q$. It follows from (2) that  $C^{m-1}\n_Q=\n_Q^m\neq0$ and $C^m\n_Q=[C^{m-1}\n_Q, \n_Q]=[\n_Q^m, \n_Q]=0$. This completes the proof. 
			\end{proof}
			
		\subsection{Examples}
			In this subsection, we describe some examples of nilpotent Lie algebras obtained by quivers. 
			\begin{Ex}\label{Ex1}
				Consider the following quiver $Q$ in Figure \ref{example1}: 
				\noindent Then the Lie algebra $\n_Q$ is given by 
				\[
					\n_Q=\mathrm{span}\{a, b,ab\}. 
				\]
				The Lie bracket is as follows:
				\begin{itemize}
					\item $[a,b]=a\cdot b-b\cdot a=ab$, 
            				\item $[a,ab]=0$, 
            				\item $[b,ab]=0$. 
				\end{itemize}
				Hence, the Lie algebra $\n_Q$ is isomorphic to the 3-dimensional Heisenberg algebra $\mathfrak{h}^3$. 
			\end{Ex}
			\begin{figure}[H]
				\begin{center}
					\includegraphics[scale=0.3]{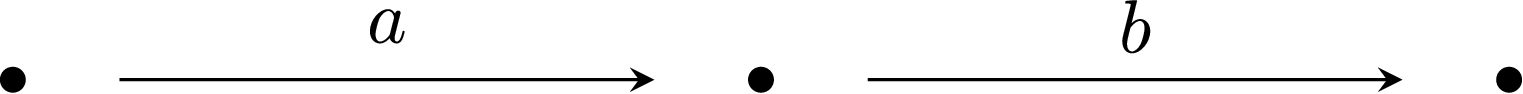}
					\caption{The quiver in Example \ref{Ex1}}
					\label{example1}
				\end{center}
			\end{figure}
			\begin{Ex}\label{Ex2}
				Consider the following quiver $Q$ in Figure \ref{example2}: 
				\noindent Then the Lie algebra $\n_Q$ is given by 
            			\[
            				\n_Q=\mathrm{span}\{a,b,c,d,ac,bc,cd,acd,bcd\}. 
            			\]
            			The Lie algebra $\n_Q$ is isomorphic to: 
            			\[
                                		\left\{\left(
                                		\begin{array}{cc|c|c|c}
                                			0 & 0 & x_1 & x_5 & x_8\\
                                			0 & 0 & x_2 & x_6 & x_9\\ \hline
                                			0 & 0 & 0 & x_3  & x_7\\ \hline
                                			0 & 0 & 0 & 0 & x_4 \\ \hline
                                			0 & 0 & 0 & 0 & 0 \\
                                		\end{array}\right)\mid x_i\in\mathbb{R}\right\}.
                                	\]	
			\end{Ex}
			\begin{figure}[H]
				\begin{center}
					\includegraphics[scale=0.3]{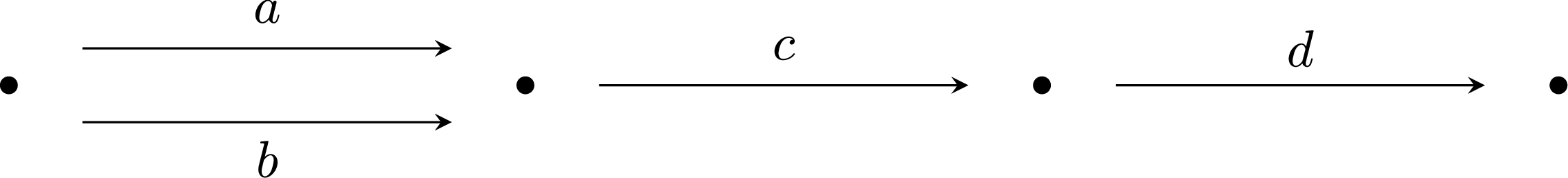}
					\caption{The quiver in Example \ref{Ex2}}
					\label{example2}
				\end{center}
			\end{figure}
			Examples \ref{Ex1} and \ref{Ex2} are obtained by block decomposition of matrices. Therefore, they admit algebraic Ricci solitons. 
			\begin{Rem}\label{Rem1}
				The Lie algebra obtained by the following quiver in Figure \ref{remark1} is the same as the Lie algebra of example \ref{Ex2}. 
			\end{Rem}
			\begin{figure}[h]
				\begin{center}
					\includegraphics[scale=0.3]{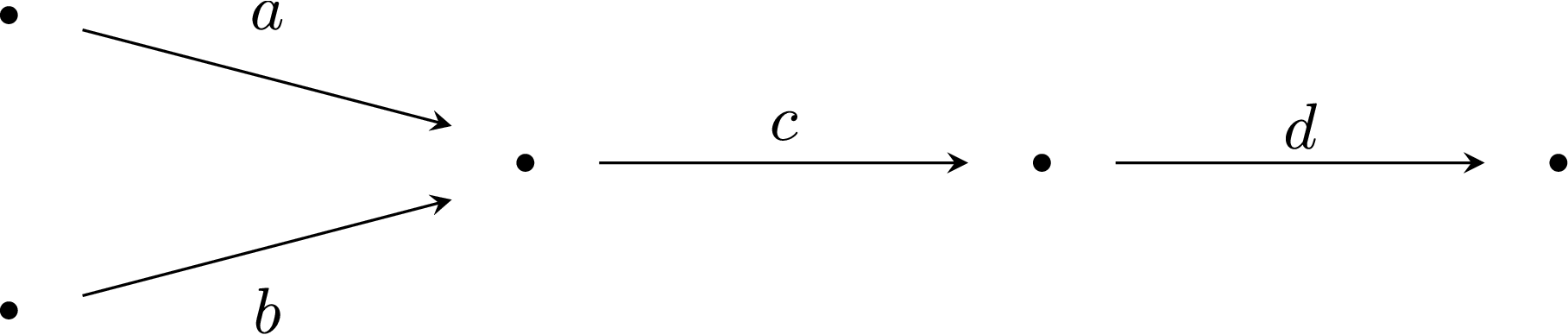}
					\caption{The quiver in Remark \ref{Rem1}}
					\label{remark1}
				\end{center}
			\end{figure}
			\begin{Rem}~
				\begin{itemize}
					\item There exist Lie algebras obtained by block decompositions which cannot be obtained by quivers.  For example, the following $5$-dimensional Heisenberg Lie algebra $\mathfrak{h}^5$ cannot be obtained by quivers: 
					\[
                					\left\{\left(\begin{array}{c|cc|c}
                                			0 & x_1 & x_2 & x_5 \\ \hline
                                			0 & 0 & 0 & x_3 \\
                                			0 & 0 & 0 & x_4 \\ \hline
                                			0 & 0 & 0 & 0 \\
                                			\end{array}\right)\mid x_i\in\mathbb{R}\right\}. 
            				\]
					If this is obtained by a quiver $Q_1$, then $Q_1$ has four paths of length $1$ and one path of length $2$ as in Figure \ref{remark2-2}. Then, $\n_{Q_1}$ is isomorphic to $\mathfrak{h}^3\oplus\mathbb{R}^2$, but not $\mathfrak{h}^5$. 
					\vspace{-5mm}
					\begin{figure}[H]
            					\begin{center}
            						\includegraphics[scale=0.25]{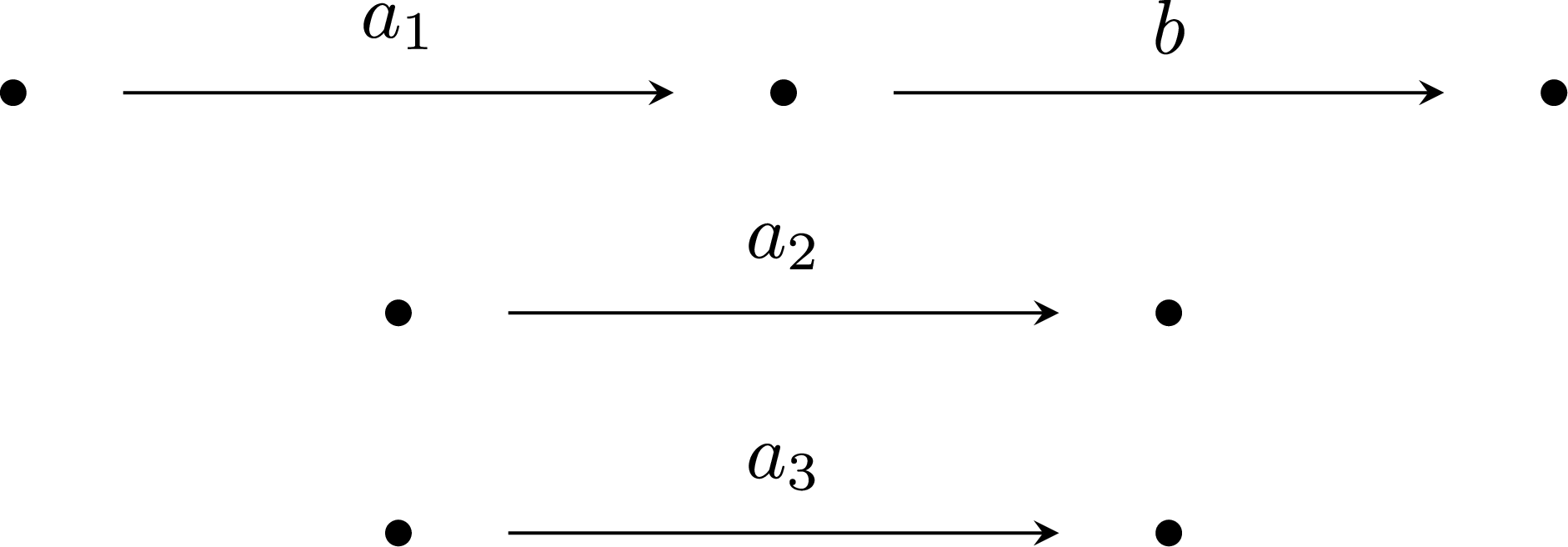}
							\caption{Quiver $Q_1$}
							\label{remark2-2}
            					\end{center}
            				\end{figure}
					\vspace{-5mm}
					\item There exist Lie algebras obtained by quivers which can not be obtained as nilpotent parts of parabolic subalgebras. The Lie algebra obtained by the quiver in Figure \ref{remark2} is an example: 
            				\begin{figure}[H]
            					\begin{center}
            						\includegraphics[scale=0.3]{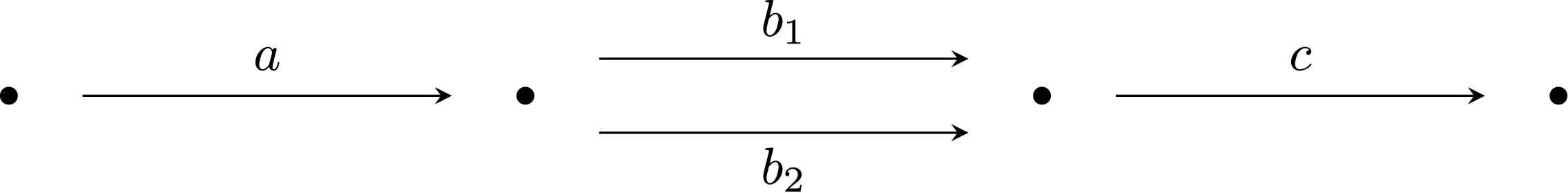}
							\caption{Quiver $Q_2$}
							\label{remark2}
            					\end{center}
            				\end{figure}
					This nilpotent Lie algebra $\n_{Q_2}=\n_{Q_2}^1\oplus\n_{Q_2}^2\oplus\n_{Q_2}^3$ satisfies $\mathrm{dim}\n_{Q_2}^1=4$, $\mathrm{dim}\n_{Q_2}^2=4$ and $\mathrm{dim}\n_{Q_2}^3=2$, where $\n_{Q_2}^i$ is the subspace spanned by paths of length $i$.  By looking at the list of parabolic subalgebras, no nilpotent parts have such dimensions. 
				\end{itemize}
			\end{Rem}
	\section{Main theorem}\label{main_theorem}
		In this section, we show the main theorem. For the nilpotent Lie algebras $\n_Q$, we construct inner products, which are algebraic Ricci solitons, inductively. 
		\subsection{Nice basis}
			In the section, we recall the definition of a nice basis, and prove that $\n_Q$ has a nice basis. 
			\begin{Def}[Nikolayevsky (\cite{N})]\label{nice_def}
				Let $\{X_1,\ldots,X_n\}$ be a basis of a nilpotent Lie algebra $\n$, with $[X_i,X_j]=\sum_{k=1}^nc_{ij}^kX_k$. Then the basis $\{X_1,\ldots,X_n\}$ is said to be {\it nice} if 
				\begin{enumerate}[(1)]
					\item for any $i$ and $j$, there exists at most one $k$ such that $c^k_{ij}\neq 0$, 
					\item for any $i$ and $k$, there exists at most one $j$ such that $c^k_{ij}\neq 0$. 
				\end{enumerate}
			\end{Def}
			If an orthonormal basis $\{X_1, \ldots, X_n\}$ is nice, then the Ricci operator is diagonal with respect to this basis (\cite{N}), that is, 
			\[
				\langle\Ric(X_i),X_j\rangle=0~~~(i\neq j).
			\]
			Therefore, we have the following by Equation \ref{Ric_nil}. 
			\begin{Prop}
				Let $(\n,\langle,\rangle)$ be a metric nilpotent Lie algebra, and $\{X_1,\ldots,X_n\}$ be an orthonormal nice basis of $(\n,\langle,\rangle)$. Then the following equation holds: 
            			\begin{equation}\label{Ric_nice}
                        			\Ric(X_k)=r_k X_k, ~~~~r_k=-\frac{1}{2}\sum_{i,j}\langle[X_k,X_i],X_j\rangle^2+\frac{1}{2}\sum_{i<j}\langle[X_i,X_j],X_k\rangle^2. 
                        		\end{equation}
			\end{Prop}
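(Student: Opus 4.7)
The plan is to derive both claims by specializing the nilpotent Ricci formula~(2.1) to $(X,Y) = (X_k, X_l)$ and invoking the two niceness conditions; no deeper tool is needed. Throughout I would write $c^{c}_{ab} := \langle [X_a, X_b], X_c \rangle$ for the structure constants in the given orthonormal basis, so that (2.1) becomes
\[
\langle \Ric(X_k), X_l \rangle = -\frac{1}{2}\sum_{i,j} c^{j}_{ki}\, c^{j}_{li} \;+\; \frac{1}{4}\sum_{i,j} c^{k}_{ij}\, c^{l}_{ij}.
\]

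For the off-diagonal statement $\langle \Ric(X_k), X_l\rangle = 0$ with $k \neq l$, I would look at the two sums separately. In the second sum, every $(i,j)$-term contains two structure constants sharing the lower indices $(i,j)$ but with different upper indices $k, l$; condition~(1) of Definition~\ref{nice_def} forbids both from being nonzero, so every term vanishes. For the first sum, I would first use bracket antisymmetry $c^{j}_{ki} = -c^{j}_{ik}$ to rewrite it as $-\frac{1}{2}\sum_{i,j} c^{j}_{ik}\, c^{j}_{il}$; the two constants now share the first lower index $i$ and the upper index $j$, so condition~(2) of Definition~\ref{nice_def} permits at most one choice of the remaining index, and each term again vanishes when $k \neq l$.

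For the diagonal entry, setting $X = Y = X_k$ in~(2.1) already yields the first summand $-\frac{1}{2}\sum_{i,j}\langle [X_k, X_i], X_j\rangle^2$ in the stated form. In the second summand $\frac{1}{4}\sum_{i,j}\langle [X_i, X_j], X_k\rangle^2$, the $i = j$ terms are zero and the integrand is invariant under $i \leftrightarrow j$ since squaring cancels the sign from bracket antisymmetry; hence $\sum_{i,j} = 2\sum_{i<j}$ and the prefactor becomes $\frac{1}{2}$, matching the formula. The main obstacle is negligible --- the only moment requiring care is remembering to invoke bracket antisymmetry before applying condition~(2), so that the fixed shared indices line up with the form in which that condition was stated; the rest is routine index-bookkeeping.
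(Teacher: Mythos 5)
Your proof is correct and follows essentially the same route as the paper: both specialize the general nilpotent Ricci formula (\ref{Ric_nil}) to the orthonormal nice basis and use the symmetry of $\langle[X_i,X_j],X_k\rangle^2$ in $i\leftrightarrow j$ to pass from $\frac{1}{4}\sum_{i,j}$ to $\frac{1}{2}\sum_{i<j}$. The only difference is that the paper simply cites \cite{N} for the diagonality of the Ricci operator, whereas you verify it directly from the two niceness conditions (correctly, including the use of antisymmetry before applying condition (2)), so your argument is if anything more self-contained.
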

			We now prove that $\Path(Q)$ is a nice basis of $\n_Q$. This is useful for the later calculations, and also indicates that $\n_Q$ has a relatively simple structure.  
			\begin{Prop}
				Let $Q$ be a finite quiver without cycles, $\Path(Q)$ be the set of all paths in $Q$, and $\n_Q$ be the nilpotent Lie algebra obtained by $Q$. Then, $\Path(Q)$ is a nice basis of $\n_Q$. 
			\end{Prop}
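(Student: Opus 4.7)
The plan is to verify both conditions of Definition~\ref{nice_def} directly from the description of the bracket on $\n_Q$. For any $x, y \in \Path(Q)$, one has $[x, y] = x \cdot y - y \cdot x$, and each of $x \cdot y$ and $y \cdot x$ is either $0$ or a single basis element (a path in $Q$). The argument in the proof of Proposition~\ref{step}(1) shows that they cannot both be nonzero, for otherwise $xyx$ would be a path in $Q$ and the arrows of $x$ would appear twice in it, contradicting Proposition~\ref{length}(1). Hence $[x, y]$ is either $0$ or of the form $\pm z$ for a unique $z \in \Path(Q)$, which gives condition~(1) immediately.

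For condition~(2), I fix $x, z \in \Path(Q)$ and show that there is at most one $y \in \Path(Q)$ with $c^z_{xy} \neq 0$. By the observation above, any such $y$ must satisfy either $z = xy$ or $z = yx$. If two candidates $y_1, y_2$ both arise on the ``same side'' (both give $z = xy_i$, or both give $z = y_i x$), then cancelling the common factor $x$ forces $y_1 = y_2$. The only remaining situation is the mixed one, say $z = xy_1 = y_2 x$ (the other mixed possibility is symmetric).

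I expect the mixed case to be the main obstacle, and I plan to handle it by a short combinatorial argument on the positions of $x$ inside $z$. Length comparison gives $|y_1| = |y_2| =: \ell \geq 1$; setting $k := |x|$ and $z = \alpha_1 \cdots \alpha_{k+\ell}$, the path $x$ appears in $z$ both as $\alpha_1 \cdots \alpha_k$ and as $\alpha_{\ell+1} \cdots \alpha_{\ell+k}$. If $\ell \geq k$, the two occurrences are disjoint, so each arrow of $x$ already appears at two distinct positions of $z$, contradicting Proposition~\ref{length}(1). If $\ell < k$, the two occurrences overlap and comparing the two expressions yields $\alpha_i = \alpha_{i-\ell}$ for $\ell < i \leq k$, producing a repeated arrow inside $x$ and hence inside $z$, again contradicting Proposition~\ref{length}(1). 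In either sub-case we conclude $y_1 = y_2$, completing the verification of the nice basis property.
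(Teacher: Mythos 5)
Your proof is correct, and its overall architecture matches the paper's: condition (1) follows from Proposition~\ref{step}(1), and condition (2) is split according to whether the two candidate factors sit on the same side of $x$ (where cancellation gives $y_1=y_2$) or on opposite sides. The only genuine divergence is in the mixed case $z=xy_1=y_2x$, which is the one nontrivial step. The paper resolves it by chasing sources and targets: from $x_ix_{j_1}=x_k=x_{j_2}x_i$ one gets $s(x_{j_1})=t(x_i)=t(x_k)=t(x_{j_1})$, so $x_{j_1}$ would be a cycle, contradicting the hypothesis on $Q$ directly. You instead compare the two occurrences of $x$ inside $z$ (as a prefix and as a suffix) and exhibit a repeated arrow, contradicting Proposition~\ref{length}(1). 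Both arguments are valid and ultimately rest on the absence of cycles; the paper's is slightly shorter and needs no case split on $\ell$ versus $k$, while yours could be streamlined as well, since $\ell\geq 1$ already forces $\alpha_1=\alpha_{\ell+1}$ at two distinct positions of $z$ in either sub-case. One small wording point: in the mixed case you do not ``conclude $y_1=y_2$'' --- you show the case is impossible; that still gives the required uniqueness, but the phrasing should be adjusted.
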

			\begin{proof}
				It is easy to see that $\Path(Q)$ is a basis of $\n_Q$. Let us denote by 
				\[
					\Path(Q)=\{x_i \mid 1\leq i \leq n \},~~[x_i, x_j]=\sum_{k=1}^nc_{ij}^kx_k
				\]
				Take $i$, $j$, then $[x_i, x_j]\in\{0\}\cup\pm\Path(Q)$ from Proposition \ref{step}. Therefore, it satisfies Definition \ref{nice_def} (1). 
				
				It remains to show the condition (2). Take $i$ and $k$, and assume that there exist $j_1$, $j_2$ such that $c_{ij_1}^k\neq 0 \neq c_{ij_2}^k$. By the above argument, it satisfies $[x_i, x_{j_1}]=\pm x_k$ and $[x_i, x_{j_2}]=\pm x_k$. One can complete the proof by studying the four cases. The first case is $[x_i, x_{j_1}]=x_k$ and $[x_i, x_{j_2}]=-x_k$, that is, 
				\[
					x_ix_{j_1}=x_k=x_{j_2}x_i. 
				\]
				In this case, one can show that $s(x_{j_1})=t(x_i)=t(x_k)=t(x_{j_1})$. This means that $x_{j_1}$ is a cycle, which is a contradiction. The second case is $[x_i, x_{j_1}]=x_k$ and $[x_i, x_{j_2}]=x_k$, that is, 
				\[
					x_ix_{j_1}=x_k=x_ix_{j_2}. 
				\]
				This yields that $x_{j_1}=x_{j_2}$, and hence the condition (2) is satisfied in this case. One can show the remaining two cases similarly. 
			\end{proof}
		\subsection{Specific subalgebras}\label{subalgebra}
			Let $Q=(V, E, s, t)$ be a quiver of length $m$, and $\Path(Q)$ be the set of all paths in $Q$. In this subsection, we study a specific subalgebra of $\n_Q$. First of all, we define 
			\[
				S\coloneqq\{a\in E\mid \exists x\in\Path(Q) \text{ s.t. }ax \text{ is a path of length }m\}, 
			\]
			which is called the {\it starting set}. Then we consider a new quiver $Q'=(V', E', s', t')$ defined by 
			 \[
				V'=V,~~E'=(E\setminus S)\cup\{ab\in\Path(Q) \mid a\in S, b\in E\},~~s'=s|_{E'},~~t'=t|_{E'}. 
			\] 
			By definition, it is easy to see that $\Path(Q)=S\sqcup\Path(Q')$. In the following, we gives an easy example of $Q'$. 
			\begin{Ex}\label{Ex_Q}
				Let $Q$ be the quiver in Figure \ref{example_Q}. By definition, one has $S=\{a_1, a_2\}$. Then the quiver $Q'$ becomes the one in Figure \ref{example_Q2}. By sorting out the figure, the nilpotent Lie algebra obtained by $Q'$ is isomorphic to the nilpotent Lie algebra obtained by Figure \ref{example_Q3}. 
			\end{Ex}
			\begin{figure}[H]
				\begin{center}
					\includegraphics[scale=0.3]{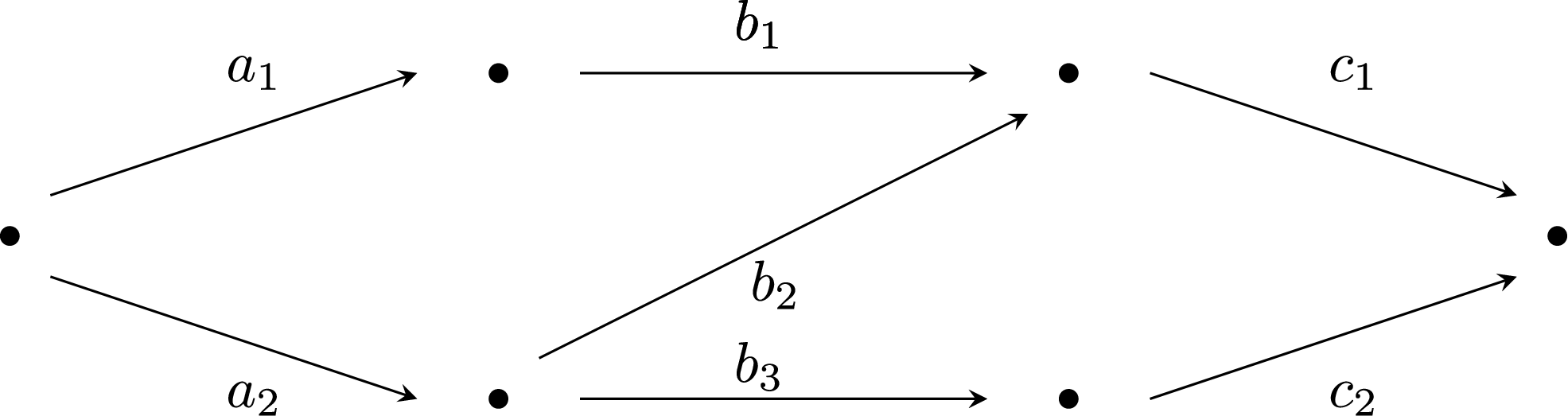}
					\caption{The quiver $Q$ in Example \ref{Ex_Q}}
					\label{example_Q}
				\end{center}
			\end{figure}
			\begin{figure}[H]
        				\begin{center}
        					\includegraphics[scale=0.3]{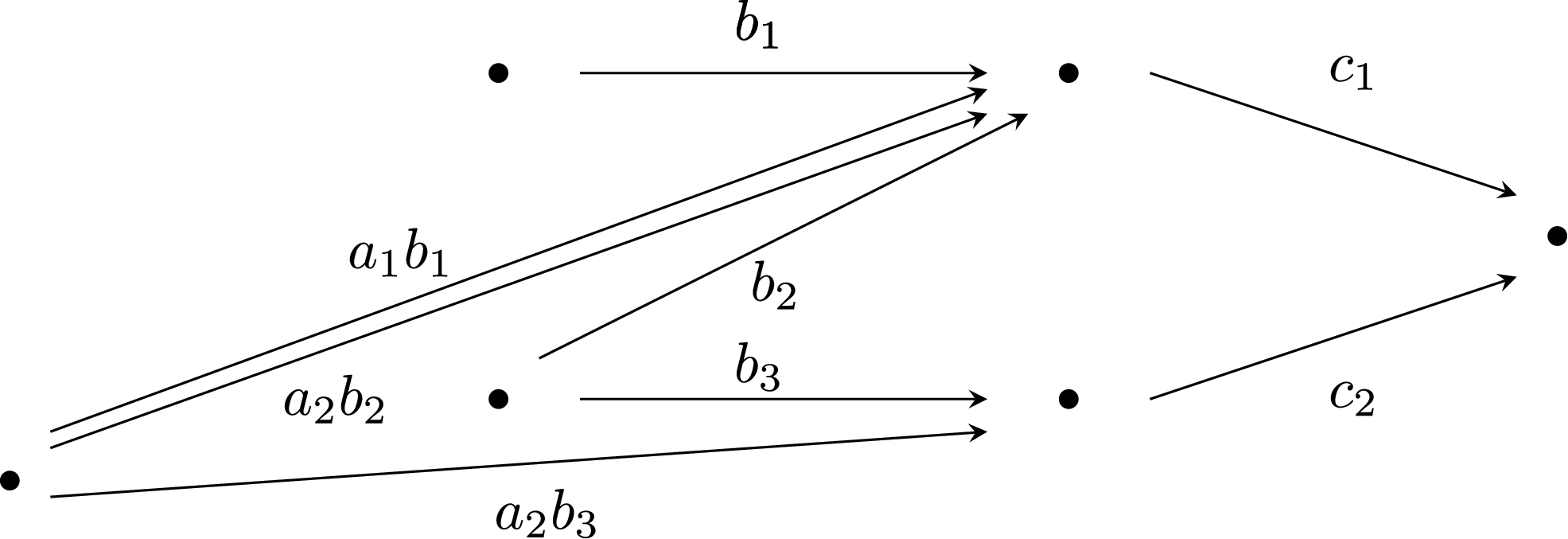}
					\caption{The quiver $Q'$ in Example \ref{Ex_Q}}
					\label{example_Q2}
        				\end{center}
        			\end{figure}
			\begin{figure}[H]
        					\begin{center}
        						\includegraphics[scale=0.3]{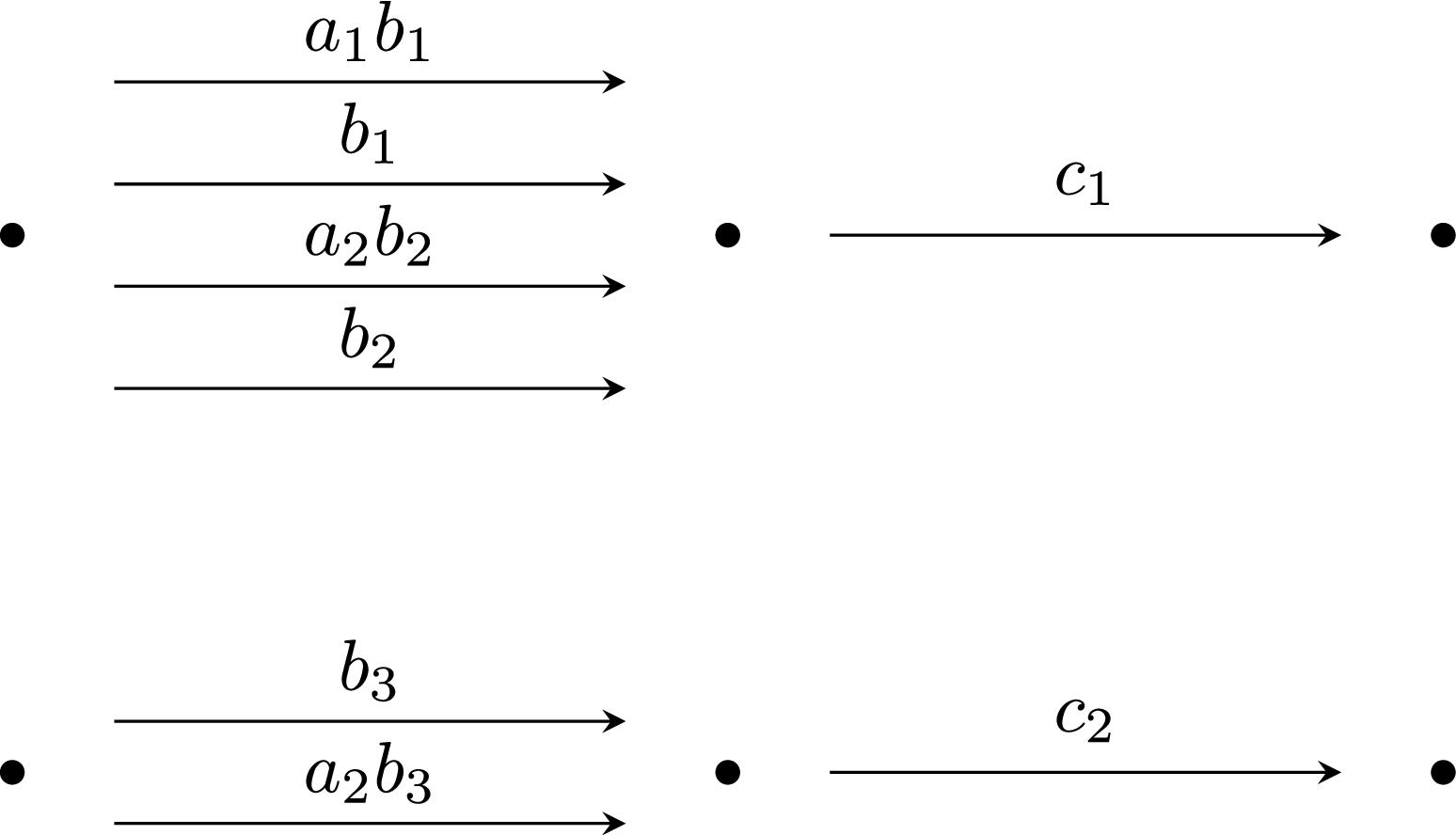}
						\caption{Quiver of length 2}
						\label{example_Q3}
        					\end{center}
        				\end{figure}
			\begin{Prop}
				The subspace $\n'\coloneqq\n_{Q'}=\mathrm{span}\Path(Q')$ is an ideal of $\n_Q$, and of $(m-1)$-step nilpotent. 
			\end{Prop}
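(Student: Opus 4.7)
The plan is to establish both assertions --- $\n'$ being an ideal of $\n_Q$ and being $(m-1)$-step nilpotent --- using the decomposition $\Path(Q)=S\sqcup\Path(Q')$ together with a single combinatorial observation about arrows in $S$.

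For the ideal property, take $x\in\Path(Q')$ and $y\in\Path(Q)$. By Proposition~\ref{step}(1), $[x,y]\in\{0\}\cup(\pm\Path(Q))$, and if $[x,y]=\pm w\neq 0$, then $w\in\{xy,yx\}$ is a $Q$-path whose $Q$-length is $|x|_Q+|y|_Q\geq 2$. Since every element of $S\subset E$ has $Q$-length $1$, it follows that $w\notin S$, so $w\in\Path(Q)\setminus S=\Path(Q')$ and $[x,y]\in\n'$.

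For the $(m-1)$-step nilpotency, the key combinatorial observation is: for any path $p=\alpha_1\alpha_2\cdots\alpha_r\in\Path(Q)$, only the initial arrow $\alpha_1$ can belong to $S$. Indeed, if $\alpha_i\in S$ for some $i\geq 2$, then by definition of $S$ there exists $y\in\Path(Q)$ of length $m-1$ with $\alpha_iy\in\Path(Q)$, and concatenating with $\alpha_1\cdots\alpha_{i-1}$ produces a path of length $i+m-1>m$, contradicting the maximality of $m$. Using this, I would first verify that $Q'$ is itself a finite quiver without cycles (finiteness is immediate; a $Q'$-cycle would translate, via the embedding $E'\hookrightarrow\Path(Q)$, to a $Q$-cycle, contradicting our hypothesis on $Q$) and then invoke Proposition~\ref{step}(3) to reduce the claim to showing that the length of $Q'$ equals $m-1$. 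For the upper bound, a $Q'$-path $\beta_1\cdots\beta_{r'}$ corresponds to a $Q$-path of $Q$-length $r\leq m$; the observation forces at most one $\beta_j$ to have $Q$-length $2$, and that $\beta_j$ must be $\beta_1$ (otherwise an $S$-arrow would appear at a non-initial position of the $Q$-path). Hence $r=|\beta_1|_Q+(r'-1)$, and in either case --- $|\beta_1|_Q=1$, in which case $\alpha_1\notin S$ forces $r\leq m-1$ since any length-$m$ path starts with an $S$-arrow, giving $r'=r\leq m-1$; or $|\beta_1|_Q=2$, giving $r'=r-1\leq m-1$ --- we conclude $r'\leq m-1$. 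For the lower bound, any length-$m$ $Q$-path $\alpha_1\cdots\alpha_m$ decomposes as the $Q'$-path $(\alpha_1\alpha_2)(\alpha_3)\cdots(\alpha_m)$ of length $m-1$.

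The main obstacle I anticipate is the bookkeeping between $Q$ and $Q'$: tracking whether a given path is viewed as a $Q$-path or a $Q'$-path, and verifying that the concatenation conditions $t'=t$, $s'=s$ transfer correctly so that the canonical decomposition $(\alpha_1\alpha_2)(\alpha_3)\cdots(\alpha_m)$ really is a valid $Q'$-path. The structural lemma on $S$-arrows is precisely what keeps this bookkeeping tidy, as it confines all of the decomposition ambiguity to the initial position of a path.
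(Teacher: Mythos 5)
Your proposal is correct and follows essentially the same route as the paper: the ideal property via the observation that nonzero brackets are paths of length at least $2$ (hence lie in $\Path(Q)\setminus S=\Path(Q')$), and the nilpotency step via the fact that only the initial arrow of a path can belong to $S$, combined with Proposition~\ref{step} applied to $Q'$. Your write-up is somewhat more explicit than the paper's about the bookkeeping between $Q$-length and $Q'$-length (the paper phrases the upper bound as a short contradiction argument), but the underlying ideas are identical.
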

			\begin{proof}
				It is easy to see that $\n'$ is an ideal, since 
				\[
					[\n_Q, \n_Q]\subset \mathrm{span}\{x\in\Path(Q)\mid \text{length of }x\geq 2 \}\subset \n'. 
				\]
				Hence, we have only to show that $\n'$ is of $(m-1)$-step nilpotent. Take $a\in S$. By definition, there exists $x\in \Path(Q)$ such that $ax$ is a path of length $m$. Note that $x\notin S$. We thus have $x\in \Path(Q')$ and is of length $m-1$. This yields that $\n'$ is of at least $(m-1)$-step nilpotent. 
				
				It remains to show that $\n'$ is of at most $(m-1)$-step nilpotent. Assume that $\n'$ is of $m$-step nilpotent. Then there exist $\beta_1, \beta_2,\ldots, \beta_m\in E'$ such that $\beta_1\beta_2\cdots\beta_m$ is a path of length $m$. By the definition of $S$, one has $\beta_1\in S$, This contradicts to $\beta_1\in E'$, which completes the proof. 
			\end{proof}
			Recall that the automorphism group $\mathrm{Aut}(Q)$ acts on $\Path(Q)$. By the definition of the action, the length of a path is preserved by $\mathrm{Aut}(Q)$. Recall also that an automorphism is a bijection $f: Q\to Q$ satisfying that $t(\alpha)=s(\beta)$ is equivalent to $t(f(\alpha))=t(f(\beta))$ for any $\alpha, \beta\in E$. Note that the same equivalence holds for any $\alpha, \beta\in\Path(Q)$. We here prove some properties about automorphisms of quivers regarding $S$ and $\Path(Q')$. 
			\begin{Prop}\label{S}
				Let $Q=(V, E, s, t)$ be a quiver of length $m$, and consider $S$ and $Q'$ defined above. Then every automorphism $f\in \mathrm{Aut}(Q)$ of the quiver $Q$ satisfies the following:  
				\begin{enumerate}[(1)]
					\item $f(S)=S$, 
					\item $f(\Path(Q'))=\Path(Q')$, 
					\item $f|_{\Path(Q')}\in\mathrm{Aut}(Q')$. 
				\end{enumerate}
			\end{Prop}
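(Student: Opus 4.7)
The plan is to exploit the fact that every $f\in\Aut(Q)$ extends naturally to a bijection on $\Path(Q)$ via $f(\alpha_1\cdots\alpha_r)\coloneqq f(\alpha_1)\cdots f(\alpha_r)$, and that this extension preserves length. Indeed, the defining equivalence $t(\alpha)=s(\beta)\iff t(f(\alpha))=s(f(\beta))$ for $\alpha,\beta\in E$ guarantees inductively that $f$ sends paths of $Q$ to paths of $Q$ of the same length, and likewise for $f^{-1}$. All three assertions will follow from this single observation plus the compatibility $s'=s|_{E'}$, $t'=t|_{E'}$ between the structure maps of $Q$ and $Q'$.

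From this setup, (1) is immediate: given $a\in S$, choose $x\in\Path(Q)$ so that $ax$ is a path of length $m$; then $f(ax)=f(a)f(x)$ is a length-$m$ path whose initial arrow is $f(a)$, hence $f(a)\in S$. Applying the same argument to $f^{-1}$ yields $f(S)=S$. For (2), I would first verify that $f$ permutes $E'=(E\setminus S)\cup\{ab\in\Path(Q)\mid a\in S,b\in E\}$. By (1) and bijectivity of $f$ on $E$, $f$ permutes $E\setminus S$; and for a length-two element $ab\in E'$ with $a\in S$, $b\in E$, the image $f(ab)=f(a)f(b)$ has $f(a)\in S$, $f(b)\in E$ and is again a path, so $f(ab)\in E'$. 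The same reasoning for $f^{-1}$ shows that $f$ permutes the length-two part of $E'$. For a general $p\in\Path(Q')$ written as a concatenation $\gamma_1\cdots\gamma_k$ of arrows $\gamma_i\in E'$ satisfying $t'(\gamma_i)=s'(\gamma_{i+1})$, the compatibility of $s',t'$ with $s,t$ together with the automorphism property of $f$ on $Q$ yields $t'(f(\gamma_i))=s'(f(\gamma_{i+1}))$, hence $f(p)\in\Path(Q')$; equality comes from applying the same argument to $f^{-1}$.

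Part (3) is then a direct consequence: the bijection $f|_{E'}\colon E'\to E'$ is furnished by (2), and the required equivalence $t'(\alpha)=s'(\beta)\iff t'(f(\alpha))=s'(f(\beta))$ for $\alpha,\beta\in E'$ is precisely what was verified en route to (2). The only point that requires any care is the identification of $\Path(Q')$ as a subset of $\Path(Q)$ via concatenation of $E'$-arrows; once one notes that the source and target maps on $Q'$ are literally the restrictions of those on $\Path(Q)$, the argument reduces entirely to the fact that $f$ respects concatenation and preserves path lengths, which is essentially built into the definition of an automorphism of a quiver.
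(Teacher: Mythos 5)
Your argument is correct and follows essentially the same route as the paper: extend $f$ to a length-preserving bijection of $\Path(Q)$, use a length-$m$ witness $ax$ to conclude $f(S)=S$, and deduce (2) and (3) from the compatibility $s'=s|_{E'}$, $t'=t|_{E'}$. The only cosmetic difference is in (2), where the paper simply invokes the decomposition $\Path(Q)=S\sqcup\Path(Q')$ together with bijectivity of $f$ on $\Path(Q)$, whereas you verify directly that $f$ permutes $E'$ and hence $\Path(Q')$; both are fine.
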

			\begin{proof}
				First of all, we prove (1). It is enough to show that $f(S)\subset S$. Take any $a\in S$. By definition, there exists $y\in\Path(Q)$ such that $ay$ is a path of length $m$. Since the path $f(ay)=f(a)f(y)$ is of length $m$, it satisfies $f(a)\in S$. The assertion (2) easily follows from (1), since $f$ is bijective and $\Path(Q)=S\sqcup\Path(Q')$. Finally, we prove (3). Note that $f|_{\Path(Q')}$ is bijective by (2). Hence, we have only to show that $t'(\alpha)=s'(\beta)$ is equivalent to $t'(f(\alpha))=s'(f(\beta))$ for $\alpha, \beta\in E'$. Note that $s'=s|_{E'}$ and $t'=t|_{E'}$. Therefore, the claim follows from the fact that $E'\subset \Path(Q)$ and $f$ is an automorphism of $Q$.  
			\end{proof}
			Note that an automorphism of $Q'$ cannot be extended to an automorphism of $Q$ in general. For example, in the quiver $Q'$ described in Example \ref{Ex_Q}, the edge $b_1$ can be mapped to $a_1b_1$ by an automorphism of $Q'$, but not by an automorphism of $Q$ (since they have different lengths in Q). The next proposition generalizes this example. 
			\begin{Prop}\label{Aut}
				Let $Q=(V, E, s, t)$, $S$, and $Q'$ be as above. Take $a\in S$ and $x\in\Path(Q')$ with $t(a)=s(x)$. Then there exists $f\in\mathrm{Aut}(Q')$ such that $f(x)=ax$. 
			\end{Prop}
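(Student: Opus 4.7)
The plan is to construct $f$ explicitly as a simple involution on $E'$ obtained by swapping a small collection of edges, and then to verify the automorphism condition via one key structural observation about vertices adjacent to $a$.

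First, expand $x \in \Path(Q')$ as $x = \beta_1 \beta_2 \cdots \beta_r$ with each $\beta_i \in E'$. The edge $\beta_1$ must lie in $E \setminus S$: if instead $\beta_1 = a'b$ for some $a'\in S$ and $b \in E$, then $s(a') = s(\beta_1) = t(a)$, and since $a'\in S$ admits a length-$m$ extension in $Q$, prepending $a$ would yield a path in $Q$ of length $m+1$, contradicting the length of $Q$. Hence $a\beta_1$ is a new length-$2$ edge of $Q'$, and $ax = (a\beta_1)\beta_2\cdots \beta_r$ is the representation of $ax$ in $\Path(Q')$.

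Next, define $f\colon E' \to E'$ to be the involution that swaps $\delta \leftrightarrow a\delta$ for every $\delta \in E\setminus S$ with $s(\delta) = t(a)$, and fixes every other edge of $E'$. Certainly $f$ is a bijection of $E'$ and $f(\beta_1) = a\beta_1$. The crux of the argument is the claim that no edge of $E'$ has target in $\{t(a), s(a)\}$. Any such hypothetical edge $e$ is either an element of $E\setminus S$ (length $1$ in $Q$) or a new edge of the form $a''b''$ (length $2$ in $Q$); in either case, concatenating $e$ with $a$ and the length-$(m-1)$ tail of a length-$m$ path beginning with $a$ (available because $a \in S$) yields a path in $Q$ of length at least $m+1$, impossible. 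With this in hand, the automorphism axiom $t'(\alpha) = s'(\gamma) \Leftrightarrow t'(f(\alpha)) = s'(f(\gamma))$ follows by a routine case analysis: $f$ preserves the target of every edge of $E'$, and alters the source of an edge only when that source lies in $\{t(a), s(a)\}$; since $t'(\alpha) \notin \{t(a), s(a)\}$ for any $\alpha \in E'$, the two sides of the equivalence agree (both being false) in every case where $f$ would otherwise change the outcome.

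Finally, the same no-incoming-edges claim yields $s(\beta_i) = t(\beta_{i-1}) \notin \{t(a), s(a)\}$ for every $i\ge 2$, so none of $\beta_2,\ldots,\beta_r$ is among the swapped edges. Therefore $f(x) = f(\beta_1)f(\beta_2)\cdots f(\beta_r) = (a\beta_1)\beta_2\cdots\beta_r = ax$, as required. The main obstacle is precisely the structural observation that both $t(a)$ and $s(a)$ are ``frozen'' vertices of $Q'$ (admitting no incoming edges) once $a\in S$; this is the exact feature that allows the otherwise naively source-changing swap to remain compatible with the quiver structure of $Q'$.
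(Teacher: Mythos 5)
Your proof is correct and follows essentially the same strategy as the paper's: an explicit involution on $E'$ swapping edges with their $a$-prolongations, justified by the key structural fact that no edge of $E'$ has target $t(a)$ or $s(a)$ (the paper swaps only the single pair $x_1\leftrightarrow ax_1$, but establishes exactly this fact in the first case of its verification). One small repair is needed in your justification of that fact: in the subcase where $e\in E\setminus S$ has $t(e)=t(a)$, concatenating $e$ with the length-$(m-1)$ tail $y$ gives a path of length exactly $m$, not $m+1$, and the contradiction there is that $e$ would then belong to $S$ (so $e\notin E'$), not that the maximal length of $Q$ is exceeded; with that one-line fix the argument is complete.
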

			\begin{proof}
				Since $x\in\Path(Q')$,  there exist $x_1, x_2, \ldots, x_r\in E'$ such that $x=x_1x_2\cdots x_r$. Here let us define a map $f: E'\to E'$ by
				\[
					f(ax_1)=x_1,~~ f(x_1)=ax_1,~~f|_{E'\setminus \{x_1, ax_1\}}=\id.
				\]
				Then it is easy to see that $f$ is bijective and satisfies
				\[
					f(x)=f(x_1)f(x_2)\cdots f(x_r)=(ax_1)x_2\cdots x_r=ax. 
				\] 
				We have only to show $f\in \mathrm{Aut}(Q')$. Take any $\alpha, \beta\in E'$, and we claim that $t(\alpha)=s(\beta)$ is equivalent to $t(f(\alpha))=s(f(\beta))$.  The proof is divided into two cases. 
				
				The first case is $\beta\in \{x_1, ax_1\}$. In this case, the claim holds in a trivial sense, that is, there are no $\alpha\in E'$ such that $t(\alpha)=s(\beta)$ or $t(f(\alpha))=s(f(\beta))$. This can be proved as follows. 
				
				First of all, we show that there is no $\alpha\in E'$ such that $t(\alpha)=s(\beta)$. Assume that such $\alpha\in E'$ exists. Note that, since  $a\in S$, there exists $y\in\Path(Q)$ such that $ay$ is a path of length $m$. In the case of $\beta=x_1$, we have $t(\alpha)=s(\beta)=s(x_1)=t(a)=s(y)$. This yields that $\alpha y$ is a path of length $m$, which contradicts to $\alpha\in E'$. Also, in the case of $\beta=ax_1$, we have $t(\alpha)=s(ax_1)=s(ay)$. This is a contradiction, since $\alpha ay$ is a path of length $m+1$. 
				
				We next show that there is no $\alpha\in E'$ such that $t(f(\alpha))=s(f(\beta))$. This can be proved by the same argument, since $f(\beta)\in\{x_1, ax_1\}$. This completes the proof the first case. 
							
				The second case is $\beta\in E'\setminus\{x_1, ax_1\}$. In this case, one has $f(\beta)=\beta$, and hence $s(f(\beta))=s(\beta)$. On the other hand, it satisfies $t(\alpha)=t(f(\alpha))$ by definition, since $f$ only interchanges $x_1$ and $ax_1$. Therefore the claim also holds in this case, which completes the proof. 
			\end{proof}
			We then consider the nilpotent Lie algebra $\n_Q$ and the subalgebra $\n'$. The next proposition gives a sufficient condition for a derivation of $\n'$ to be extended to a derivation of $\n_Q$. The automorphism given in the above proposition plays a key role. 
			\begin{Prop}\label{Der}
				Assume that $D \in \Der(\n')$ satisfies: 
				\begin{itemize}
					\item $D$ is diagonal with respect to the basis $\Path(Q')$, 
					\item $D\circ f=f\circ D$ holds for every $f\in\mathrm{Aut}(Q')$.  
				\end{itemize}
				Then, the extension $\overline{D}$ of $D$ by $\overline{D}|_S=0$ is a derivation of $\n_Q$. 
			\end{Prop}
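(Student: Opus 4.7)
The plan is to extend $\overline{D}$ by $\mathbb{R}$-linearity to $\n_Q = \mathrm{span}(S)\oplus \n'$ and to check the derivation identity $\overline{D}[x,y]=[\overline{D}(x),y]+[x,\overline{D}(y)]$ on pairs of basis vectors drawn from $\Path(Q) = S\sqcup\Path(Q')$. The whole argument rests on one structural fact which I would record first: in any $Q$-path $\alpha_1\alpha_2\cdots\alpha_k$, only the initial edge $\alpha_1$ can belong to $S$. Indeed, if $\alpha_j\in S$ for some $j\geq 2$, then by definition there is $z\in\Path(Q)$ with $\alpha_j z$ a path of length $m$, so $\alpha_1\cdots\alpha_{j-1}\alpha_j z$ is a $Q$-path of length $j+m-1\geq m+1$, contradicting that $m$ is the length of $Q$.

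With this lemma in hand I would dispose of the two easy cases immediately. When $x,y\in\Path(Q')$, the bracket in $\n_Q$ coincides with the bracket in $\n'$ (since $\n'$ is an ideal), and $\overline{D}$ restricts to $D$ on $\n'$; the identity is then just the derivation property of $D$ on $\n'$. When $x,y\in S$, the structural lemma forbids either $xy$ or $yx$ from being a $Q$-path --- the non-initial edge would itself lie in $S$ --- so $[x,y]=0$, and since $\overline{D}$ annihilates $S$ both sides of the identity vanish.

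The substantive case is the mixed one: $x\in S$ and $y\in\Path(Q')$ (the reverse being symmetric). Here $\overline{D}(x)=0$ and $\overline{D}(y)=D(y)=\lambda_y y$ by diagonality. The same structural lemma forces $y\cdot x=0$, since such a concatenation would place $x\in S$ past the initial edge of a $Q$-path, so $[x,y]=x\cdot y$. If $x\cdot y=0$ there is nothing to check; otherwise $xy\in\Path(Q')$ by the decomposition $\Path(Q)=S\sqcup\Path(Q')$, and Proposition \ref{Aut} supplies an $f\in\Aut(Q')$ with $f(y)=xy$. The assumed commutation $D\circ f=f\circ D$ then gives $D(xy)=D(f(y))=f(D(y))=\lambda_y f(y)=\lambda_y\, xy$, which matches $[x,\overline{D}(y)]=\lambda_y[x,y]=\lambda_y\, xy$. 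The hypotheses on $D$ are genuinely used only in this case, and the main obstacle is precisely to see that the $D$-eigenvalue on $xy$ must equal that on $y$; this is why $\Aut(Q')$-equivariance, rather than mere diagonality in the $\Path(Q')$ basis, is indispensable.
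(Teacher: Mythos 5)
Your proof is correct and follows essentially the same route as the paper's: the same case split on $\Path(Q)=S\sqcup\Path(Q')$, with the only substantive case ($x\in S$, $y\in\Path(Q')$) handled exactly as in the paper via Proposition~\ref{Aut} and the $\Aut(Q')$-equivariance of $D$. Your explicit structural lemma (only the initial edge of a $Q$-path can lie in $S$) is a welcome justification of the facts $[S,S]=0$ and $y\cdot x=0$, which the paper uses without comment.
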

			\begin{proof}
				Take any $x, y\in\Path(Q)$, and we show that 
				\[
					\overline{D}[x, y]=[\overline{D}x, y]+[x, \overline{D}y]. 
				\]
				In the case of $x, y \in\Path(Q')$, the claim holds by $\overline{D}|_{\n'}\in\Der(\n')$. In the case of $x, y\in S$, the claim follows from $[x,y]=0$. It remains to study the case of $x\in S$ and $y\in\Path(Q')$. Since $\n'$ is an ideal of $\n_Q$, one has
				\[
					\overline{D}[x,y]=D[x,y]. 
				\]
				The right hand side of the claim satisfies
				\[
					[\overline{D}x,y]+[x,\overline{D}y]=[x,Dy],
				\]
				since $\overline{D}x=0$ and $\overline{D}y=Dy$. By the first hypothesis of the proposition, that is $D$ being diagonal, there exists $\lambda_y\in\mathbb{R}$ such that $Dy=\lambda_yy$. Therefore, the right hand side of the claim coincides with $[x,Dy]=[x,\lambda_yy]=\lambda_y[x,y]$. Therefore, we have only to show that 
				\[
					D[x,y]=\lambda_y[x,y]. 
				\]
				In the case that $[x, y]=0$, it is easy to see the claim holds. In the case that $[x, y]=xy$, there exists $f\in\mathrm{Aut}(Q')$ such that $f(y)=xy$, by Proposition \ref{Aut}. Ir then follows from the second hypothesis that 
				\[
					D[x,y]=D(xy)=D\circ f(y)= f\circ Dy=f(\lambda_yy)=\lambda_yf(y)=\lambda_y[x,y]
				\]
				which completes the proof. 
			\end{proof}
		\subsection{Ricci curvature}
			As in the previous subsection, let $Q$ be a finite quiver without cycles, $\Path(Q)$ be the set of all paths in $Q$, and $\n_Q$ be the  nilpotent Lie algebra obtained by $Q$. Let us consider an inner product $\langle, \rangle$ on $\n_Q$ which makes $\Path(Q)$ orthogonal. Denote by $\overline{x}\coloneqq x/|x|$, and then the basis $\{\overline{x}\mid x\in\Path(Q)\}$ is orthonormal. It follows from Equation (\ref{Ric_nice}) that 
			\begin{equation}\label{Ric}
            			\Ric(\overline{x})=\left(-\frac{1}{2}\left(\sum_{t(a)=s(x)}\langle[\overline{a}, \overline{x}], \overline{ax}\rangle^2+\sum_{t(x)=s(a)}\langle[\overline{x}, \overline{a}], \overline{xa}\rangle^2\right)+\frac{1}{2}\sum_{bc=x}\langle[\overline{b}, \overline{c}], \overline{x}\rangle^2\right)\overline{x}. 
            		\end{equation}
			We use the notations $S$, $Q'$ and $\n'=\n_{Q'}$ defined in Subsection \ref{subalgebra}. The aim of this subsection is to give an expression of $\Ric(\overline{x})$ in terms of the Ricci curvature $\Ric_{\n'}$ of $\n'$ with respect to $\langle, \rangle|_{\n'\times\n'}$. For this purpose, let us divide $\Path(Q')$ into the following three subsets:  
			\begin{align*}
				P_1&\coloneqq\{x\in\Path(Q')\mid \exists a\in S\text{ s.t. }t(a)=s(x)\},  \\
				P_2&\coloneqq\{ay\in\Path(Q')\mid a\in S, y\in P_1\}, \\
				P_0&\coloneqq\Path(Q')\setminus(P_1\cup P_2). 
			\end{align*}
			Note that these subsets are disjoint, since $ab$ is not a path for any $a$, $b\in S$. Therefore, $\Path(Q)$ can be decomposed into four disjoint subsets $S$, $P_1$, $P_2$, and $P_0$. 
			\begin{Prop}\label{RicSQ'}
				Under the above notations, we have the following:  
				\begin{enumerate}[(1)]
					\item $\Ric (\overline{x})=\left(\displaystyle-\frac{1}{2}\sum_{t(x)=s(y)}\langle [\overline{x},\overline{y}],\overline{xy}\rangle^2\right)\overline{x}$~~~~(for each $x\in S$)
					\item $\Ric (\overline{x})=\Ric_{\n'}(\overline{x})+\displaystyle\left(-\frac{1}{2}\sum_{\substack{t(a)=s(x)\\ a\in S}}\langle [\overline{a}, \overline{x}], \overline{ax}\rangle^2\right)\overline{x}$~~~~(for each $x\in P_1$)
					\item $\Ric (\overline{x})=\Ric_{\n'}(\overline{x})+\displaystyle\left(\frac{1}{2}\sum_{\substack{bc=x\\ b\in S}}\langle [\overline{b}, \overline{c}], \overline{x}\rangle^2\right)\overline{x}$~~~~(for each $x\in P_2$)
					\item $\Ric (\overline{x})=\Ric_{\n'}(\overline{x})$~~~~(for each $x\in P_0$)
				\end{enumerate}
			\end{Prop}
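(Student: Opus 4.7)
My approach is to apply the Ricci formula (\ref{Ric}) to $\n_Q$ and, for each of the three sums appearing there, split the summation according to whether the summation variable lies in $S$ or in $\Path(Q')$. The three $\Path(Q')$-parts will reassemble into $\Ric_{\n'}(\overline{x})$, while the $S$-parts will either vanish or produce exactly the correction terms of (1)--(4).

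The key technical tool is a single length argument: by the definition of $S$, every $a \in S$ admits a continuation $az$ of length $m$, so any path containing $a$ other than as its initial edge prolongs to one of length $> m$, contradicting the hypothesis that $Q$ has length $m$. From this I would deduce that in the second sum $\sum_{t(x)=s(a)}$ no $a \in S$ can ever occur (for any $x$ of length $\geq 1$), and in the third sum $\sum_{bc=x}$ no $c \in S$ can occur. Moreover, in the first sum an $a \in S$ contribution occurs exactly when $x \in P_1$ (essentially by the definition of $P_1$) and is forbidden when $x \in S$ or $x \in P_2$; and in the third sum a $b \in S$ contribution occurs exactly when $x$ begins with an $S$-edge, i.e., when $x \in P_2$.

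With these four facts the four cases follow by direct bookkeeping. In case (1) the first and third sums are empty (the third because $x$ has length $1$), leaving only the second sum, which carries no $S$-term. Cases (2) and (3) each isolate one surviving $S$-contribution as the stated correction, while in case (4) no $S$-contribution occurs at all.

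The remaining substantive point is to identify the $\Path(Q')$-parts with $\Ric_{\n'}(\overline{x})$. For the first two sums this is immediate from the equality $\Path(Q) \setminus S = \Path(Q')$ and from the fact that the Lie bracket of two $\Path(Q')$-elements computed in $\n_Q$ coincides with the bracket computed in $\n_{Q'} \cong \n'$. For the third sum one must match the decompositions $bc = x$ in $Q$ with $b, c \in \Path(Q')$ bijectively onto the decompositions of $x$ in $Q'$, and this is the step I expect to be the main obstacle: in the $P_2$ case the leading $S$-edge of $x$ fuses with its successor into a single $Q'$-arrow, so precisely one $Q$-decomposition (the one with $b \in S$) is absent on the $Q'$-side, and that is exactly the decomposition which contributes the correction term in (3); the remaining decompositions match one-to-one and yield the desired $\Ric_{\n'}$-contribution.
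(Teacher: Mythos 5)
Your proposal is correct and follows essentially the same route as the paper: apply formula (\ref{Ric}), split each sum according to whether the summation variable lies in $S$ or in $\Path(Q')$, reassemble the $\Path(Q')$-parts into $\Ric_{\n'}(\overline{x})$, and identify the surviving $S$-contributions with the stated corrections. The one point you treat more explicitly than the paper --- the bijection between decompositions $bc=x$ in $Q$ with $b\notin S$ and decompositions in $Q'$, with exactly the $b\in S$ decomposition missing on the $Q'$-side when $x\in P_2$ --- is indeed the place where care is needed, and your length argument handles it correctly.
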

			\begin{proof}
				The Ricci curvature $\Ric(\overline{x})$ can be calculated by using (\ref{Ric}). We study the four cases individually. 
				
				The first case is $x\in S$. In this case, there does not exist $a\in \Path(Q)$ such that $t(a)=s(x)$. Furthermore, since $x$ is of length 1, it cannot be expressed as $x=bc$. Therefore, the first and third sigmas in (\ref{Ric}) vanish, which proves the assertion (1). 
				
				In order to study the remaining cases, let $x\in \Path(Q')$. Note that $xa$ is not a path for any $a\in S$.  Then it follows from (\ref{Ric}) that 
            			\begin{equation} \label{RicQ'}
					\Ric(\overline{x})=\Ric_{\n'}(\overline{x})+\left(-\frac{1}{2}\sum_{\substack{t(a)=s(x) \\ a\in S}}\langle[\overline{a}, \overline{x}], \overline{ax}\rangle^2+\frac{1}{2}\sum_{\substack{bc=x \\ b\in S}}\langle[\overline{b}, \overline{c}], \overline{x}\rangle^2\right)\overline{x}. 
				\end{equation}
				The first sigma in (\ref{RicQ'}) is nonzero if and only if $x\in P_1$. Also, the second sigma in (\ref{RicQ'}) is nonzero if and only if $x\in P_2$. This completes the proof. In fact, in the case of $x\in P_1$, the second sigma in (\ref{RicQ'}) is zero since $x\notin P_2$, which proves the assertion (2). The assertion (3) and (4) can be proved similarly. 
			\end{proof}
		\subsection{Proof of the main theorem}
			The following is our main theorem, which states that $\n_Q$ always admits an algebraic Ricci soliton. Moreover, such inner product on $\n_Q$ can be chosen to be compatible with the structure of the quiver $Q$.
			\begin{Thm}
				Let $Q$ be a finite quiver without cycles, and $\n_Q$ be the  nilpotent Lie algebra obtained by $Q$. Then, there exists an inner product $\langle, \rangle$ on $\n_Q$ which satisfies the following three conditions: 
				\begin{enumerate}
            				\renewcommand{\labelenumi}{(\roman{enumi})}
            				\item $\langle,\rangle$ is algebraic Ricci soliton satisfying $\Ric=-\id+D$, where $D\in \Der(\n_Q)$ is diagonal with respect to $\Path(Q)$,  
            				\item $\Path(Q)$ is an orthogonal basis with respect to $\langle, \rangle$, 
					\item $\langle,\rangle$ is preserved by $\mathrm{Aut}(Q)$. 
            			\end{enumerate}
			\end{Thm}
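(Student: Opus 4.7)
The plan is to induct on the length $m$ of $Q$. For $m=1$ the algebra $\n_Q$ is abelian, so the inner product making $\Path(Q)=E$ orthonormal yields $\Ric=0=-\id+\id$ with $\id\in\Der(\n_Q)$, and the three conditions hold trivially.

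For the inductive step I invoke the setup of Subsection \ref{subalgebra}: let $S$, $Q'$, and $\n'=\n_{Q'}$ be as defined there, so $\n'$ is an ideal of $\n_Q$ of $(m-1)$-step nilpotency. By the inductive hypothesis, $\n'$ carries an inner product $\langle,\rangle'$ with $\Ric_{\n'}=-\id+D'$, $D'$ a diagonal derivation with eigenvalues $\lambda'_x$ on $\bar{x}$, $\Path(Q')$ an orthogonal basis, and $\Aut(Q')$-invariance. Extend $\langle,\rangle'$ to $\n_Q$ by declaring $S\perp\n'$, making the edges of $S$ mutually orthogonal, and leaving the scalars $c_a:=1/|a|^2$ for $a\in S$ to be determined. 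The key simplification comes from Proposition \ref{Aut}: for $a\in S$ and $y\in\Path(Q')$ with $t(a)=s(y)$ there exists $f\in\Aut(Q')$ with $f(y)=ay$, so invariance forces $|ay|'=|y|'$ and $\lambda'_{ay}=\lambda'_y$. Substituting into Proposition \ref{RicSQ'}, each squared structure constant $\langle[\bar a,\bar x],\overline{ax}\rangle^2=|ax|^2/(|a|^2|x|^2)$ collapses to $c_a$, so writing $\epsilon_x:=r_x-r'_x$ for $x\in\Path(Q')$ one obtains $\epsilon_x=0$ on $P_0$, $\epsilon_x=-\tfrac12\sum_{a\in S,\,t(a)=s(x)}c_a$ on $P_1$, and $\epsilon_x=\tfrac12 c_b$ on $x=by\in P_2$.

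Setting $\lambda_x:=r_x+1$, condition (i) amounts to the additivity $\lambda_{xy}=\lambda_x+\lambda_y$ whenever $xy$ is a path. I carry out a case analysis on the partition $\Path(Q)=S\sqcup P_0\sqcup P_1\sqcup P_2$. The maximality of $m$ eliminates most combinations: for any $a\in S$, a path of length $\geq 1$ ending at $s(a)$, or ending at $t(a)$ while itself not lying in $S$, would extend to a path in $Q$ of length $>m$. This leaves only the types $(P_0,P_0)\to P_0$, $(P_1,P_0)\to P_1$, $(P_2,P_0)\to P_2$, and the cross type $(S,P_1)\to P_2$. The first three reduce, using $s(xy)=s(x)$, to tautologies $\epsilon_{xy}=\epsilon_x$. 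The fourth, combined with the formula $\lambda_x=1-\tfrac{N_{t(x)}}{2}c_x$ (derived from Proposition \ref{RicSQ'}(1) and the same collapse above), yields
\[
	(N_{t(x)}+2)\,c_x+\sum_{\substack{a\in S,\,a\neq x\\ t(a)=t(x)}}c_a=2,
\]
where $N_v:=|\{y\in\Path(Q):s(y)=v\}|$. This system decouples across vertices $v$: with $T_v=\{a\in S:t(a)=v\}$ and $t_v=|T_v|$, each block reads $((N_v+1)I+J)\mathbf{c}=2\,\mathbf{1}$, which is invertible and has the unique symmetric solution $c_a=2/(N_v+t_v+1)$. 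Since $c_a$ depends only on $t(a)$, and $N_v,t_v$ are $\Aut(Q)$-invariant, property (iii) is automatic; property (ii) holds by construction; and the diagonality of $D=\Ric+\id$ on $\Path(Q)$ is immediate from the nice basis property.

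The main obstacle will be the combinatorial case analysis ruling out the forbidden $(P_i,P_j)$-combinations: this is where the no-cycle and maximal-length constraints of $Q$ play their decisive role. Once those cases are excluded, the derivation condition becomes explicit, the norms on $S$ are uniquely and positively determined, and the three required properties follow at once.
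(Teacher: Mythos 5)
Your proposal is correct and follows essentially the same route as the paper: the same induction via $S$, $Q'$, $\n'=\n_{Q'}$, the same partition $\Path(Q)=S\sqcup P_1\sqcup P_2\sqcup P_0$, the same use of Proposition \ref{Aut} together with $\Aut(Q')$-invariance to collapse the structure constants to $1/|a|^2$, and your linear system yields exactly the paper's choice $|a|^2=\tfrac{1}{2}(\#P_1^j+\#S_j+1)$ for $a\in S_j$. The only cosmetic differences are that you solve for the norms on $S$ as unknowns rather than positing them and verifying, and you check the derivation condition as eigenvalue additivity $\lambda_{xy}=\lambda_x+\lambda_y$ instead of splitting $D=\overline{D'}+A$ as in Proposition \ref{Der}.
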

			\begin{proof}
				It is proved by induction with respect to the number of steps. First of all, we consider the case that $\n_Q$ is of $1$-step nilpotent, that is abelian.  Let $\langle, \rangle$ be  an inner product on $\n_Q$ which makes $\Path(Q)$ orthonormal. Then it satisfies Conditions (ii) and (iii). Furthermore, since $\n_Q$ is abelian, we have 
				\[
					\Ric=0=-\id+\id,~~~ \id\in\Der(\n_Q). 
				\]
				Therefore, it also satisfies (i). 
				
				Assume that the assertion holds for the $(r-1)$-step case, and we prove the $r$-step case. Let $\n_Q$ be the nilpotent Lie algebra obtained by $Q=(V, E, s, t)$, which is of $r$-step nilpotent. We use the notions $S$, $Q'$ and $\n'=\n_{Q'}$ defined in Section \ref{subalgebra}. Recall that 
				\[
					S\subset E, ~~~~\Path(Q')=\Path(Q)\setminus S, 
				\]
				and $\n'$ is of $(r-1)$-step nilpotent. Hence, by induction hypothesis, there exists an inner product $\langle, \rangle'$ on $\n'$ satisfying (i), (ii) and (iii).  
				We define an inner product $\langle, \rangle$ on $\n_Q$ as follows:   
				\begin{itemize}
					\item $\langle, \rangle|_{\n'\times\n'}=\langle, \rangle'$, 
            				\item $\Path(Q)$ is an orthogonal basis of $\n_Q$, 
            				\item $\langle a, a \rangle=1/2(\#\{x\in\Path(Q')\mid t(a)=s(x)\}+\#\{b\in S\mid t(b)=t(a)\}+1)$ for each $a\in S$. 
				\end{itemize}
				Note that $\langle, \rangle$ is positive definite, since $\langle a, a \rangle>0$ for each $a\in S$. We will prove that this inner product $\langle, \rangle$ satisfies Conditions (i), (ii) and (iii). By definition, it is easy to see that (ii) is satisfied. 
				
				One can also easily show that (iii) is satisfied. Take any $f\in\mathrm{Aut}(Q)$ and $x\in \Path(Q)$, and we prove that $\langle x, x\rangle=\langle f(x), f(x)\rangle$. It follows from Proposition \ref{S} that $f$ preserves $S$ and $\Path(Q')$, respectively. In the case of  $x\in\Path(Q')$, the claim follows from the induction hypothesis. Namely, since $\langle, \rangle'$ is preserved by the automorphism $f|_{\Path(Q')}$ of $Q'$, it satisfies 
				\[
					 \langle x, x\rangle=\langle x, x\rangle'=\langle (f|_{\Path(Q')})(x), (f|_{\Path(Q')})(x)\rangle'=\langle f(x), f(x)\rangle. 
				\]
				In the case of $x\in S$, since $\langle x, x\rangle$ is defined by information of $Q$, it is preserved by the automorphism $f$ of $Q$. 
								
				It remains to prove (i). First of all, we show that 
				\begin{equation}\label{norm}
					\langle [\overline{a}, \overline{x}] , \overline{ax} \rangle=\frac{1}{|a|}~~(\text{for }a\in S, x\in P_1, ax\in P_2). 
				\end{equation}
				Since $[a, x]=ax$ by definition, we have 
				\begin{equation}\label{norm2}
            				[\overline{a},\overline{x}]=\frac{1}{|a||x|}[a,x]=\frac{1}{|a||x|}ax=\frac{|ax|}{|a||x|}\overline{ax}.
            			\end{equation}
				Since $ax$ is a path, Proposition \ref{Aut} yields that there exists $f\in \mathrm{Aut}(Q')$ such that $f(x)=ax$. One knows that $f$ preserves $\langle,\rangle'$ by induction hypothesis. Hence we have $\langle x, x\rangle=\langle ax, ax\rangle$. By substituting this into (\ref{norm2}), one can complete the proof of Claim (\ref{norm}). 
				
				We next calculate $|a|$ for $a\in S$. Recall that $|a|$ is defined by using the numbers of certain vertices. Here we define some subsets of $S$, $P_1$, and $P_2$. Let us consider the set of target points of $S$, which is a finite set: 
				\[
					V_S\coloneqq\{t(a)\mid a\in S\}=\{v_1, v_2, \ldots, v_m\}. 
				\] 
				For each $j\in\{1, \ldots, m\}$, we define
				\begin{align*}
					S_j&\coloneqq\{a\in S\mid t(a)=v_j\}, \\
					P_1^j&\coloneqq\{x\in\Path(Q')\mid s(x)=v_j\}, \\
					P_2^j&\coloneqq\{ay\in\Path(Q')\mid a\in S_j, y\in P_1^j\}.  
				\end{align*}
				Namely, they are the set of paths whose targets are $v_j$, the set of paths whose sources are $v_j$, and the set of paths which pass through $v_j$, respectively. It is easy to see that they give partitions, that is, 
				\[
					\textstyle{S=\bigsqcup_{j=1}^m S_j, ~~P_1=\bigsqcup_{j=1}^m P_1^j, ~~P_2=\bigsqcup_{j=1}^m P_2^j}. 
				\]
				By the definition of the inner product, $\langle a,a\rangle$ is constant on each $S_j$. To be exact, every $a\in S_j$ satisfies $t(a)=v_j$ and hence 
				\begin{equation}\label{N_j}
					N_j\coloneqq\langle a, a\rangle=\frac{1}{2}(\# P_1^j+\# S_j+1). 
				\end{equation}
				We now calculate $\Ric(\overline{x})$ for each $x\in \Path(Q)$. The first case is $x\in S_j$ In this case, it follows Proposition \ref{RicSQ'} that 
				\[
					\Ric (\overline{x})=\biggr(-\frac{1}{2}\sum_{t(x)=s(y)}\langle[\overline{x}, \overline{y}], \overline{xy}\rangle^2\biggr)\overline{x}=\biggr(-\frac{1}{2N_j}\# P_1^j\biggr)\overline{x}~~~(x\in S_j). 
				\]
				In fact, since $x\in S_j$ and $t(x)=s(y)$, it satisfies $y\in P_1^j$ and $xy\in P_2^j$. Therefore, Equation (\ref{norm}) yields that 
				\[
					\langle[\overline{x},\overline{y}],\overline{xy}\rangle^2=1/|x|^2=1/N_j. 
				\]
				The remaining cases can be calculated similarly. In the case of $x\in P_1^j$, it satisfies 
				\[
					\Ric (\overline{x})-\Ric_{\n'}(\overline{x})=\biggr(-\frac{1}{2}\sum_{\substack{t(a)=s(x) \\ a\in S}}\langle[\overline{a}, \overline{x}], \overline{ax}\rangle^2\biggr)\overline{x}=\left(-\frac{1}{2N_j}\#S_j\right)\overline{x}~~~(x\in P_1^j). 
				\]
				In the case of $x\in P_2^j$, since $x$ can be written as $x=bc$ with $b\in S$ uniquely (and it satisfies $b\in S_j$), one has 
				\[
					\Ric (\overline{x})-\Ric_{\n'}(\overline{x})=\biggr(\frac{1}{2}\sum_{\substack{bc=x\\ b\in S}}\langle [\overline{b}, \overline{c}], \overline{x}\rangle^2\biggr)\overline{x}=\frac{1}{2N_j}\overline{x}~~~(x\in P_2^j).
				\]
				In the case of $\in P_0$, one knows $\Ric(\overline{x})=\Ric_{\n'}(\overline{x})$. Therefore, we have obtained a description of $\Ric(\overline{x})$ in terms of $\Ric(\overline{x})$, $N_j$, $S_j$, and $P_1^j$. 
				
				We have to show that $\Ric=-\id+D$, where $D\in\Der(\n_Q)$ is diagonal with respect to $\Path(Q)$. By induction hypothesis, it satisfies $\Ric_{\n'}=-\id_{\n'}+D'$, where $D'\in\Der(\n')$ is diagonal with respect to $\Path(Q')$. Hence, we have an expression  
            			\[
            				\Ric=-\id +\overline{D'}+A
            			\]
				where $\overline{D'}$ is the extension of $D'$ as in Proposition \ref{Der}, and
            			\begin{equation}\label{A}
            				Ax=
            				\begin{cases}
            					\left(-\frac{1}{2N_j}\# P_1^j+1\right)x &(x\in S_j), \\
            					\left(-\frac{1}{2N_j}\#S_j\right)x & (x\in P_1^j), \\
            					\frac{1}{2N_j}x & (x\in P_2^j), \\
            					0 & (x\in P_0). 
            				\end{cases}
            			\end{equation}
				By definition, $\overline{D'}+A$ is diagonal with respect to $\Path(Q)$. Furthermore, it satisfies $\overline{D'}\in\Der(\n_Q)$ by Proposition \ref{Der}. In order to show this, take any $f\in\mathrm{Aut}(Q')$. Since $f$ preserves $\langle, \rangle'$ and $[,]$, respectively, it commutes with $\Ric_{\n'}$. Therefore, $f$ commutes with $D'=\Ric_{\n}+\id_{\n'}'$, and hence $D'$ satisfies the assumptions in Proposition \ref{Der}. 
				
				It remains to show that $A\in\Der(\n_Q)$, that is
				\begin{equation}\label{ADer}
					A[x,y]=[Ax, y]+[x, Ay]~~(\text{for all } x,y\in\Path(Q)). 
				\end{equation}
				This is satisfied for $x, y\in\Path(Q)$ with $[x, y]=0$, since $A$ is diagonal with respect to $\Path(Q)$. Nonzero brackets are given as follows:  
				\begin{enumerate}
					\renewcommand{\labelenumi}{(\arabic{enumi})}
					\item $[P_0, S_j]\subset \mathrm{span}~(S_j),~[P_0, P_1^j]\subset\mathrm{span}~(P_1^j),~[P_0, P_2^j]\subset\mathrm{span}~(P_2^j),~[P_0, P_0]\subset\mathrm{span}~(P_0)$, 
					\item $[S_j, P_1^j]=\mathrm{span}~(P_2^j)$. 
				\end{enumerate}
				One can easily check Case (1), since $A|_{P_0}=0$ and $ad_{P_0}$ preserves the other subspaces. In order to show Case (2), let us take $a\in S_j$ and $z_1\in P_1^j$. By definition of $A$, one has
				\begin{align*}
					[Aa, z_1]+[a, Az_1]&=\left(-\frac{1}{2N_j}\# P_1^j+1\right)az_1+\left(-\frac{1}{2N_j}\#S_j\right)az_1 \\
					&=\left(-\frac{1}{2N_j}\left(\# P_1^j+\# S_j-2N_j\right)\right)az_1, \\
					A[a, z_1]&=\frac{1}{2N_j}az_1. 
				\end{align*}
				They coincide, since it satisfied $2N_j=\#P_i^j+\#S_j+1$ by the definition of $N_j$ (see (\ref{N_j})). This proves $A\in\Der(\n_Q)$. This completes the proof of the assertion (i), and hence the inner product $\langle, \rangle$ on $\n_Q$ satisfies all the desired conditions. 
        			\end{proof}

\end{document}